\newtheorem{theorem}{Theorem}
\newtheorem{algorithm}[theorem]{Algorithm}
\newtheorem{corollary}[theorem]{Corollary}
\newtheorem{lemma}[theorem]{Lemma}
\newtheorem{remark}[theorem]{Remark}
\numberwithin{theorem}{section}
\numberwithin{equation}{section}
\let\Im\relax
\DeclareMathOperator{\Im}{Im}
\let\Re\relax
\DeclareMathOperator{\Re}{Re}
\newcommand{\C}{\mathbb{C}}
\renewcommand{\H}{\mathbb{H}}
\newcommand{\N}{\mathbb{N}}
\newcommand{\R}{\mathbb{R}}
\newcommand{\Z}{\mathbb{Z}}
\newcommand{\eps}{\varepsilon}
\DeclareMathOperator{\erfc}{erfc}
\DeclareMathOperator{\sech}{sech}
\DeclareMathOperator{\sinc}{sinc}
\DeclareMathOperator{\SL}{SL}
\newcommand{\F}[4]{{{}_2F_1\left({#1},\,{#2};\,{#3};\,{#4}\right)}}
\newcommand{\cc}[1]{\overline{#1}}
\newcommand{\abs}[1]{\left\vert#1\right\vert}
\begin{document}
\title{Rapid computation of $L$-functions attached to Maass forms}
\author[A.~R.\ Booker]{Andrew~R.\ Booker}
\address{Department of Mathematics, University of Bristol, University Walk,
  Bristol, BS8 1TW, United Kingdom, e-mail: andrew.booker@bristol.ac.uk}
\author[H.~Then]{Holger Then}
\address{Alemannenweg 1, 89537 Giengen, Germany,
  e-mail: holger.then@bristol.ac.uk}

\begin{abstract}
Let $L$ be a degree-$2$ $L$-function associated to a Maass cusp form.
We explore an algorithm that evaluates $t$ values of
$L$ on the critical line in time $O(t^{1+\eps})$.
We use this algorithm to rigorously compute
an abundance of consecutive zeros and investigate their distribution.
\end{abstract}

\thanks{The authors wish to express their thanks to Andreas Str\"ombergsson
and Pankaj Vishe for offering deep insight into their methods.  H.~T.\ thanks
Brian Conrey, Dennis Hejhal, Jon Keating, Anton Mellit, and Franzesco Mezzadri
for inspiring discussions. A.~B.\ and H.~T.\ acknowledge support from EPSRC
grant EP/H005188/1.}
\date{\today}

\maketitle

\section{Introduction}\label{s:intro}

In \cite{Booker2006}, the first author presented an algorithm for the
rigorous computation of $L$-functions associated to automorphic forms.
The algorithm is efficient when one desires many values of a
single $L$-function or values of many $L$-functions with a common
$\Gamma$-factor. In this paper, we explore the prototypical case of a
family of $L$-functions to which that does not apply, namely Maass cusp
forms in the eigenvalue aspect.

As described in \cite[\S5]{Booker2006}, one of the main challenges when
computing $L$-functions is the evaluation of the inverse Mellin transform
of the associated $\Gamma$-factor. Rubinstein \cite{Rubinstein2005} describes
an algorithm based on continued fractions that performs well in practice,
but for which it seems to be very difficult to obtain rigorous error
bounds. On the other hand, the algorithm in \cite{Booker2006}, following
Dokchitser \cite{Dokchitser2004}, uses a precomputation based on simpler
power series expansions that are easy to make rigorous;
it works well for motivic $L$-functions of low
weight, but suffers from catastrophic precision loss when the shifts in
the $\Gamma$-factor grow large, as is the case for Maass forms.

A well-known similar problem occurs when one attempts to evaluate an
$L$-function high up in the critical strip. Rubinstein, following an idea of
Lagarias and Odlyzko \cite{LagariasOdlyzko1979}, has demonstrated that this can
be dealt with effectively by multiplying by an exponential factor to compensate
for the decay of the $\Gamma$-factor; specifically, for a complete
$L$-function $\Lambda(s)$ of degree $d$, one works with $e^{-i\theta
s}\Lambda(s)$ for a suitable $\theta<\pi d/4$.  This idea can be made
to work for general $L$-functions, including those associated to Maass
forms (albeit with the problems related to precision loss noted above,
if the $\Gamma$-factor is not fixed), and Molin \cite{Molin2010} has worked
out rigorous numerical methods in quite wide generality.

For the specific case of Maass cusp forms, Vishe \cite{Vishe2012} (see also
\cite{Good1983}) has shown that the ``right''
factor to multiply by to account for the variation in both $t$ and the
$\Gamma$-factor shifts is not the exponential function $e^{-i\theta s}$,
but rather the hypergeometric function
\begin{equation}\label{e:hypgeom}
\F{\frac{s+\epsilon+ir}2}{\frac{s+\epsilon-ir}2}{\frac{1}{2}+\epsilon}{
  -\tan^2\theta}
\end{equation}
where $\epsilon$ denotes the parity of the Maass form, and
$\frac{1}{4}+r^2$ is its Laplacian eigenvalue.
To understand the motivation for this factor, consider first the case
of a classical holomorphic cuspform $f$, for which
the $L$-function is defined via the Mellin transform
$$
\Lambda(s)=\int_0^\infty f(iy)y^s\frac{dy}y.
$$
Since $f$ is holomorphic and vanishes in the cusp, we can change the
contour of integration
from $(0,\infty)$ to $e^{i\theta}(0,\infty)$ for some
$\theta\in(-\pi/2,\pi/2)$; writing $y=e^{i\theta}u$ for $u\in\R$, we
obtain
$$
e^{-i\theta s}\Lambda(s)=\int_0^\infty f(ie^{i\theta}u)u^s\frac{du}u.
$$
Thus, Rubinstein's exponential factor arises naturally from a contour
rotation.

For a Maass form $f$ of weight $0$ and even parity (say),
we similarly have the integral representation
$$
\Lambda(s)=\int_0^\infty f(iy)y^{s-\frac{1}{2}}\frac{dy}{y}.
$$
Since $f$ is no longer holomorphic in this case, we cannot simply rotate the
contour in this expression, but we are free to start with the rotated
integral $\int_0^\infty f(ie^{i\theta}u)u^{s-\frac{1}{2}}\frac{du}u$ and try
to relate it back to $\Lambda(s)$. As we show in \S\ref{s:prelim},
this can be done, and the two are related essentially by the factor
\eqref{e:hypgeom}.

We analyze this strategy in greater detail in \S\ref{s:prelim}, but the
upshot is that to compute Maass form $L$-functions for a wide range of
values of $r$ and $t$, it suffices to compute $f(ie^{i\theta}u)$ for
suitable values of $\theta$ and $u$.  In turn, using modularity to move
each point to the fundamental domain, the problem reduces to computing
the $K$-Bessel function $K_{ir}(y)$ for various $r$ and $y$. Fortunately,
that is a problem that underlies all computational aspects of Maass cusp
forms and has been well studied; see, for instance,
\cite{BookerStrombergssonThen2013}.

\subsection*{Numerical results}

In \S\ref{s:results}, using as input the rigorous numerical Maass form data of
\cite{Bian2017} we compute values of the corresponding Maass form
$L$-functions and use the resulting numerical data to test conjectures about
the distribution of zeros of Maass form $L$-functions in the $t$- and
$r$-aspects.  In particular, we show that the phenomenon of zero repulsion
around $\frac{1}{2}\pm ir$ that Str\"ombergsson observed
\cite{Strombergsson1999} disappears in the large eigenvalue limit.

We derive rigorous error estimates and use the interval arithmetic
package MPFI \cite{mpfi} throughout our computations to manage round-off errors.
Thus, modulo bugs in the software or hardware, our numerical results are
rigorous.

\section{Preliminaries on Maass forms}\label{s:prelim}

Let $f\in L^2(\Gamma_1(N)\backslash\H)$ be a cuspidal Maass newform
and Hecke eigenform of weight $0$ and level $N$. Then $f$ has a Fourier
expansion of the form
$$
f(x+iy)=\sqrt{y}\sum_{n=1}^\infty a_nK_{ir}(2\pi ny)\cos^{(-\epsilon)}(2\pi nx),
$$
where $a_n$ is the $n$th Hecke eigenvalue of $f$, $\frac{1}{4}+r^2$ is its
eigenvalue for the hyperbolic Laplacian
$-y^2\left(\frac{\partial^2}{\partial x^2}
+\frac{\partial^2}{\partial y^2}\right)$,
$\epsilon\in\{0,1\}$ indicates the parity,
and
$$
\cos^{(-\epsilon)}(x):=
\begin{cases}
\cos{x}&\text{if }\epsilon=0,\\
\sin{x}&\text{if }\epsilon=1.
\end{cases}
$$
Moreover, $f$ is related to its dual $\bar{f}$ via the Fricke
involution, so that
\begin{equation}\label{e:fricke}
f(z)=w\overline{f\!\left(-\frac{1}{Nz}\right)},
\end{equation}
for some complex number $w$ with $|w|=1$.

Associated to $f$ we have the $L$-function $L(s,f)$, defined
for $\Re(s)>1$ by the series
$$
L(s,f):=\sum_{n=1}^\infty\frac{a_n}{n^s}.
$$
It follows from \eqref{e:fricke} that $L(s,f)$ continues to an entire
function and satisfies a functional equation relating its values
at $s$ and $1-\bar{s}$. To see this,
let ${}_2F_1$ denote the Gauss hypergeometric function
$$\F{\alpha}{\beta}{\gamma}{z}:=
1+\sum_{n=1}^{\infty} \frac{(\alpha)_n(\beta)_n z^n}{(\gamma)_n n!},$$
and consider the family of $\Gamma$-factors
\begin{multline*}
  \gamma_\theta(s,f):=
  i^{-\epsilon}w^{-1/2}\left(\frac{\cos\theta}{\sqrt{N}}\right)^{1/2-s}
  \Gamma_\R(s+\epsilon+ir)\Gamma_\R(s+\epsilon-ir)
  \\
  \cdot\F{\frac{s+\epsilon+ir}{2}}{\frac{s+\epsilon-ir}{2}}{
    \frac{1}{2}+\epsilon}{-\tan^2\theta},
\end{multline*}
where $\Gamma_\R(s):=\pi^{-s/2}\Gamma(s/2)$ and
$\theta\in(-\pi/2,\pi/2)$ is a parameter.
By \cite[Sec.~6.699, Eqs.~3 and 4]{GradshteynRyzhik2007},
we have
\begin{align}
  \label{gamma}
  \gamma_\theta(s,f) = 4 i^{-\epsilon} w^{-1/2}
  \left(\frac{\cos\theta}{\sqrt{N}}\right)^{1/2-s}
  \frac{1}{\pi^{s+\epsilon}}
  \int_0^\infty K_{ir}(2t)
  \frac{\cos^{(-\epsilon)}(2\tan(\theta)t)}{(2\tan(\theta)t)^\epsilon}
  t^{s+\epsilon} \frac{dt}{t}
\end{align}
for $\Re s>0$.
(Note that for a Maass form with odd reflection symmetry, viz.\ $\epsilon=1$,
\eqref{gamma} has a removable singularity at $\theta=0$; this is
related to the fact that the complete $L$-function is the Mellin
transform of $\partial{f}/\partial{x}$ rather than $f$.)
Making the substitution $t\mapsto\pi n\cos(\theta)u$, we can express
the complete $L$-function
$\Lambda_\theta(s,f):=\gamma_\theta(s,f)L(s,f)$
as the Mellin transform of the Maass form along a ray in the upper half
plane:
\begin{align}
  \Lambda_\theta(s,f)
  & = \gamma_\theta(s,f) L(s,f)
  \nonumber \\
  & = c_\theta(s,f)
  \sum_{n=1}^\infty a_n \int_0^\infty (\cos(\theta)u)^{1/2}
  K_{ir}(2\pi n\cos(\theta)u)
  \cos^{(-\epsilon)}(2\pi n\sin(\theta)u) u^{s-1/2} \frac{du}{u}
  \nonumber \\
  \label{Lambda} & = c_\theta(s,f)
  \int_0^\infty f(ie^{i\theta}u) u^{s-1/2} \frac{du}{u},
  \qquad \text{where }
  c_\theta(s,f) := \frac{4 w^{-1/2}}{(2\pi i\tan\theta)^\epsilon}
  N^{\frac{1}{2}(s-\frac{1}{2})}.
\end{align}
Splitting the integral at $u=1/\sqrt{N}$ and employing
\eqref{e:fricke} completes the analytic continuation:
$$\Lambda_\theta(s,f)
= c_\theta(s,f)
\left\{ \int_{1/\sqrt{N}}^\infty f(ie^{i\theta}u) u^{s-1/2} \frac{du}{u}
+ \int_0^{1/\sqrt{N}} w\cc{f\!\left(-\frac{1}{Nie^{i\theta}u}\right)} u^{s-1/2}
\frac{du}{u}\right\}.$$
Using that $f(-\cc{z}) = (-1)^\epsilon f(z)$ and making the
substitution
$u\mapsto\frac{1}{Nu}$, we obtain the functional equation:
\begin{align*}
\Lambda_\theta(s,f)
&=c_\theta(s,f)
\int_{1/\sqrt{N}}^\infty f(ie^{i\theta}u) u^{s-1/2} \frac{du}{u}
+ \cc{c_\theta(1-\cc{s},f)}
\int_{1/\sqrt{N}}^\infty \cc{f(ie^{i\theta}u)} u^{1/2-s} \frac{du}{u}\\
&= \cc{\Lambda_\theta(1-\cc{s},f)}.
\end{align*}
In particular, $\Lambda_\theta(1/2+it,f)\in\R$ for $t\in\R$.

\section{Rigorous computation of $L$-functions}\label{s:alg}

We describe an algorithm based on the fast Fourier transform
that allows one to evaluate $\Lambda_\theta(s,f)$ quickly, if one is
interested in many points.

The integral~\eqref{Lambda} is essentially a Fourier transformation,
\begin{align}
  \tag{\ref{Lambda}a} & \Lambda_\theta(\sigma+it,f) = c_\theta(\sigma+it,f)
  \int_{-\infty}^{\infty} f(ie^{i\theta}e^u) e^{u(\sigma-1/2)} e^{iut}\,du.
\end{align}
Similarly for the integral~\eqref{gamma},
\begin{multline}\label{gamma a}
  \tag{\ref{gamma}a} \gamma_\theta(\sigma+it,f) \\
  = c_\theta(\sigma+it,f) \int_{-\infty}^{\infty} (\cos(\theta)e^u)^{1/2}
  K_{ir}(2\pi\cos(\theta)e^u)
  \cos^{(-\epsilon)}(2\pi\sin(\theta)e^u) e^{u(\sigma-1/2)} e^{iut}\,du.
\end{multline}

In order to use the fast Fourier transform to compute
$$g(t) = \int_{-\infty}^{\infty} \hat{g}(u) e^{iut}\,du,$$
we first need to discretize the integral.  To that end, let $A,B>0$ be
parameters such that $q=AB$ is an integer.  In the Poisson summation
formula,
$$\sum_{k\in\Z} g\!\left(\frac{m}{A}+kB\right) = \frac{2\pi}{B} \sum_{l\in\Z}
e\!\left(\frac{ml}{q}\right) \hat{g}\!\left(\frac{2\pi l}{B}\right),$$
we solve for $g(\frac{m}{A})$, which results in
$$g\!\left(\frac{m}{A}\right) = \frac{2\pi}{B} \sum_{l=-C'}^{C}
e\!\left(\frac{ml}{q}\right) \hat{g}\!\left(\frac{2\pi l}{B}\right) + \eps_g,$$
$$\eps_g :=
\frac{2\pi}{B} \sum_{\{l\in\Z:l<-C'\text{ or }l>C\}}
e\!\left(\frac{ml}{q}\right) \hat{g}\!\left(\frac{2\pi l}{B}\right)
- \sum_{k\neq0} g\!\left(\frac{m}{A}+kB\right).$$
In \S\ref{s:bounds} we will derive precise bounds for
this error term.

\section{Bounds}\label{s:bounds}

Let $Q(s,f)$ be the analytic conductor, defined by
$$Q(s,f) := N \frac{s+\epsilon+ir}{2\pi} \frac{s+\epsilon-ir}{2\pi}.$$
Note that $\gamma(s,f)$ satisfies the recurrence
$\gamma(s+2,f) = Q(s,f) \gamma(s,f)$.
Further, we define
$$\chi(s,f) := \frac{\cc{\gamma(1-\cc{s},f)}}{\gamma(s,f)},$$
so that $L(s,f) = \chi(s,f) \cc{L(1-\cc{s},f)}$.

\begin{lemma}\cite[\S4]{Booker2006}\label{lem:Booker4.1}
For $s$ in the strip $\{s\in\C : -\frac{1}{2}\leq\Re s\leq\frac{3}{2}\}$,
$$\abs{L(s,f)}^2 \leq \abs{\chi(s,f)Q(s,f)}
\sup_{t\in\R}\abs{L(\tfrac{3}{2}+it,f)}^2.$$
\end{lemma}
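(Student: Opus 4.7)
The plan is to apply the Phragmén--Lindelöf principle in the strip
$-\tfrac{1}{2}\leq\Re s\leq\tfrac{3}{2}$. Set
$M:=\sup_{t\in\R}\abs{L(\tfrac{3}{2}+it,f)}$. Two edge estimates come for
free: on $\Re s=\tfrac{3}{2}$ the bound $\abs{L(s,f)}\leq M$ is the definition
of $M$; on $\Re s=-\tfrac{1}{2}$ the functional equation
$L(s,f)=\chi(s,f)\cc{L(1-\cc{s},f)}$, combined with $\Re(1-\cc{s})=\tfrac{3}{2}$
there, yields $\abs{L(s,f)}\leq\abs{\chi(s,f)}M$.

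Next I would use the recurrence $\gamma(s+2,f)=Q(s,f)\gamma(s,f)$ to compute
$\abs{\chi(s,f)}$ on each edge and renormalize both estimates against
$\abs{\chi(s,f)Q(s,f)}M^2$. On $\Re s=-\tfrac{1}{2}$ the reflection $1-\cc{s}$
coincides with $s+2$, so $\abs{\chi(s,f)}=\abs{\gamma(s+2,f)}/\abs{\gamma(s,f)}=\abs{Q(s,f)}$; squaring the functional-equation bound then reads
$\abs{L(s,f)}^2\leq\abs{\chi(s,f)}^2 M^2=\abs{\chi(s,f)Q(s,f)}M^2$. On
$\Re s=\tfrac{3}{2}$ the reflection is $s-2$, so
$\abs{\chi(s,f)}=\abs{Q(s-2,f)}^{-1}$ and
$\abs{\chi(s,f)Q(s,f)}=\abs{Q(s,f)}/\abs{Q(s-2,f)}$. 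The latter ratio is
$\geq 1$ by a direct comparison of moduli (each factor
$\abs{\tfrac{3}{2}+\epsilon+iT}$ strictly exceeds $\abs{-\tfrac{1}{2}+\epsilon+iT}$
for $\epsilon\in\{0,1\}$ and $T\in\R$), so $\abs{L(s,f)}^2\leq M^2\leq\abs{\chi(s,f)Q(s,f)}M^2$
on this edge as well.

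To interpolate to the interior I would apply Phragmén--Lindelöf to
$$\psi(s):=\frac{\Lambda(s,f)^2}{\gamma(s,f)\,\cc{\gamma(1-\cc{s},f)}\,Q(s,f)},$$
in which $\cc{\gamma(1-\cc{s},f)}$ is holomorphic in $s$ by Schwarz reflection.
A short calculation gives
$\abs{\psi(s)}=\abs{L(s,f)}^2/\abs{\chi(s,f)Q(s,f)}$, so the two edge
estimates become $\abs{\psi(s)}\leq M^2$ on both $\Re s=-\tfrac{1}{2}$ and
$\Re s=\tfrac{3}{2}$. Since $L$, $\gamma$, and $Q$ all have polynomial growth
in vertical strips, $\psi$ is of finite order, and Phragmén--Lindelöf delivers
$\abs{\psi(s)}\leq M^2$ throughout the strip, which is the claim.

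The main obstacle is to verify that $\psi$ is genuinely \emph{holomorphic} in
the open strip rather than merely meromorphic. The candidate singularities are
the zeros of $Q(s,f)$ at $s=-\epsilon\pm ir$, present only for $\epsilon=0$,
and the poles of $\gamma(s,f)$ and $\cc{\gamma(1-\cc{s},f)}$. These I would
dispose of by a short case analysis in $\epsilon$: in the critical case
$\epsilon=0$, the simple pole of $\gamma(s,f)$ at $s=\pm ir$ combines with the
trivial zero of $L(s,f)$ in the numerator $\Lambda(s,f)^2$ and with the simple
zero of $Q(s,f)$ in the denominator to leave only a removable singularity,
while the poles of $\cc{\gamma(1-\cc{s},f)}$ at $s=1\pm ir$ merely contribute
harmless zeros to $\psi$.
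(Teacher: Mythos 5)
The paper does not actually reproduce a proof here: Lemma~\ref{lem:Booker4.1} is cited verbatim from \cite[\S4]{Booker2006}. Your Phragm\'en--Lindel\"of argument is correct and is essentially the argument used there, so only a brief check is in order.

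The edge normalizations are verified as you state: on $\Re s=-\tfrac{1}{2}$ one has $1-\cc{s}=s+2$, whence $\abs{\chi(s,f)}=\abs{\gamma(s+2,f)}/\abs{\gamma(s,f)}=\abs{Q(s,f)}$; on $\Re s=\tfrac{3}{2}$ one has $1-\cc{s}=s-2$, whence $\abs{\chi(s,f)Q(s,f)}=\abs{Q(s,f)}/\abs{Q(s-2,f)}\ge1$ since $\abs{\tfrac{3}{2}+\epsilon+iT}\ge\abs{-\tfrac{1}{2}+\epsilon+iT}$ for both parities. The key holomorphy check of $\psi(s)=\Lambda(s,f)^2\big/\bigl(\gamma(s,f)\,\cc{\gamma(1-\cc{s},f)}\,Q(s,f)\bigr)$ is also right: in the strip, the only singular points of the denominator occur for $\epsilon=0$, namely the simple poles of $\gamma$ at $s=\pm ir$ which cancel against the simple zeros of $Q$ there (leaving a nonvanishing holomorphic denominator), and the simple poles of $\cc{\gamma(1-\cc{s},f)}$ at $s=1\mp ir$ which produce zeros, not poles, of $\psi$; note that the latter sit on $\Re s=1$, which is interior, so $\psi$ extends holomorphically to a neighborhood of the closed strip and Phragm\'en--Lindel\"of applies cleanly. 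One small expository point: the holomorphy of $s\mapsto\cc{\gamma(1-\cc{s},f)}$ is the general fact that $s\mapsto\cc{g(\cc{s})}$ is holomorphic whenever $g$ is (its Taylor coefficients are the conjugates of those of $g$), not Schwarz reflection in the strict sense, but the conclusion you draw from it is correct.
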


\begin{remark}\rm
The estimate in Lemma~\ref{lem:Booker4.1} is not optimal since, for
$s=\frac{1}{2}+it$ for large $t$, the right-hand side grows quadratically in
$t$, whereas the convexity estimate would give $O(t^{1+\varepsilon})$.
Moreover, for $\epsilon=0$ and
$s=1\pm ir$ the bound becomes useless, since $\abs{L(1\pm ir,f)}<\infty$,
whereas $\lim_{s\to1\pm ir}\abs{\chi(s,f)Q(s,f)}\to\infty$. Nevertheless, the
estimate is clean and uniform in all parameters, and suffices for our
purposes if we keep away from $s=1\pm ir$.
\end{remark}

\begin{corollary}\label{cor:L}
For $s$ in the strip $\{s\in\C : \frac{1}{2}\leq\Re s<1\}$,
$$\abs{L(s,f)} \leq 3N^{1/2}(\abs{\Im s}+D_{s,f})$$
with
$$D_{s,f} := 3\Re s -1 + \epsilon + \abs{r}
+ \frac{(2\Re s-1)^2}{1-\Re s+\epsilon}.$$
\end{corollary}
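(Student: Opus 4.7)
The plan is to invoke Lemma~\ref{lem:Booker4.1}, which reduces the estimate to
\[
|L(s,f)|\leq |\chi(s,f)\,Q(s,f)|^{1/2}\cdot M,\qquad M:=\sup_{t\in\R}|L(\tfrac{3}{2}+it,f)|,
\]
on the strip $\tfrac{1}{2}\leq\sigma<1$ (where $\sigma=\Re s$, $t=\Im s$), and to bound each of the three factors on the right. First, $M$ is controlled from the absolutely convergent Dirichlet series: $M\leq\sum_{n\geq 1}|a_n|n^{-3/2}$, and the Kim--Sarnak bound (or Ramanujan on average from Rankin--Selberg) yields an explicit constant $C_0\leq 6\pi$ that will be absorbed into the final constant~$3$. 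Second, the triangle inequality $|s+\epsilon\pm ir|\leq \sigma+\epsilon+|t|+|r|$ gives
\[
|Q(s,f)|^{1/2}\leq \frac{N^{1/2}}{2\pi}(\sigma+\epsilon+|t|+|r|),
\]
and this together with $|\chi|=1$ already settles the claim on the critical line, where $\tfrac{1}{2}+\epsilon+|r|=D_{s,f}$.

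The main work is bounding $|\chi(s,f)|$ for $\sigma\in(\tfrac{1}{2},1)$. Using $\gamma(s,f)=N^{s/2}\Gamma_\R(s+\epsilon+ir)\Gamma_\R(s+\epsilon-ir)$ and Schwarz reflection (so $\overline{\gamma(1-\bar s,f)}=\gamma(1-s,f)$ since $\gamma$ is real on the real axis), one obtains
\[
|\chi(s,f)|=N^{1/2-\sigma}\pi^{2\sigma-1}\prod_{\pm}\left|\frac{\Gamma\!\bigl((1-s+\epsilon\pm ir)/2\bigr)}{\Gamma\!\bigl((s+\epsilon\pm ir)/2\bigr)}\right|.
\]
A separate bound on each $\Gamma$-ratio fails because of the exponential factor in Stirling's asymptotic, but these factors cancel in the product, since the imaginary parts $(r\mp t)/2$ in the numerators match $(r\pm t)/2$ in the denominators in absolute value. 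I would therefore treat the product as a whole: apply $\Gamma(z+1)=z\Gamma(z)$ to shift each numerator, introducing factors $\bigl((1-s+\epsilon\pm ir)/2\bigr)^{-1}$ of modulus at most $2/(1-\sigma+\epsilon)$ -- the source of the denominator in $(2\sigma-1)^2/(1-\sigma+\epsilon)$ -- and reduce to a balanced $\Gamma$-ratio that can be bounded via $|\Gamma(x+iy)|\leq \Gamma(x)$ combined with the log-convexity of $\log\Gamma$ on $(0,\infty)$. This yields the $(2\sigma-1)^2$ in the numerator of the correction term.

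The hardest part will be the last step: producing \emph{exactly} the expression $(2\sigma-1)^2/(1-\sigma+\epsilon)$ from the combined $\Gamma$-ratio estimate, which requires careful tracking of which polynomial factors arise from the shifts versus from the log-convexity bound, and a clean accounting of the exponential cancellation. Once this is in hand, multiplying the three factors, using $N\geq 1$ to absorb the residual $N^{(1/2-\sigma)/2}\leq 1$, and collecting terms yields $|L(s,f)|\leq 3N^{1/2}(|t|+D_{s,f})$ as claimed.
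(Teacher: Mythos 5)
Your opening is right: invoke Lemma~\ref{lem:Booker4.1}, bound the sup on $\Re s=\tfrac{3}{2}$ via Kim--Sarnak (the paper gets $\zeta(\tfrac{3}{2}+\vartheta)\zeta(\tfrac{3}{2}-\vartheta)<3\pi$, not $6\pi$, but this is a constant-chasing detail), and note $|\chi|=1$ on the critical line. But from there the proposal departs substantially from the paper, and the deviation is where the gap lies.

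You split $|\chi(s,f)Q(s,f)|$ as $|\chi|\cdot|Q|$ and try to bound $|\chi(s,f)|$ directly, proposing $\Gamma(z+1)=z\Gamma(z)$ shifts together with $|\Gamma(x+iy)|\le\Gamma(x)$ and log-convexity. The paper instead factors
\[
\abs{\chi(s,f)Q(s,f)}=\abs{\chi(s,f)\frac{\cc{Q(1-\cc{s},f)}}{Q(s,f)}}\cdot\abs{\frac{Q(s,f)^2}{\cc{Q(1-\cc{s},f)}}},
\]
bounds the first factor by the Phragm\'en--Lindel\"of principle (it equals $1$ on $\Re s=\tfrac{1}{2}$ and, after using the recurrence $\gamma(s+2)=Q(s)\gamma(s)$ to reduce to an explicit $\Gamma_\R$-ratio, is $\le 4N^{-1/2}$ on $\Re s=1$ by a closed-form $\cosh/\sinh$ computation), and then bounds the second factor, which is a ratio of quadratic polynomials, purely algebraically. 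This avoids any interior Stirling or log-convexity estimate altogether. Crucially, the correction term $\frac{(2\Re s-1)^2}{1-\Re s+\epsilon}$ in $D_{s,f}$ arises from the \emph{algebraic} bound on $\bigl|Q(s)^2/\cc{Q(1-\cc{s})}\bigr|$, not from the $\Gamma$-ratio; your proposal attributes it to the numerator shifts and log-convexity, which is a misdiagnosis of the structure.

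Beyond the misattribution, the proposal is incomplete in exactly the place you flag as ``the hardest part.'' A separate bound on $|\chi|$ must blow up near $s=1\pm ir$ when $\epsilon=0$ (as the paper's remark after Lemma~\ref{lem:Booker4.1} observes), and your scheme of shifting $\Gamma$'s and appealing to $|\Gamma(x+iy)|\le\Gamma(x)$ discards all dependence on the imaginary parts, so it cannot recover the linear $|r|$-term in $D_{s,f}$ or track how the singular behavior near $1\pm ir$ cancels against $|Q|^{1/2}$. There is no route sketched from the balanced $\Gamma$-ratio to the specific constant $3$ and the specific $D_{s,f}$. The Phragm\'en--Lindel\"of factorization is the key idea missing from your argument: it converts the interior estimate into two boundary calculations and an algebraic inequality, which is both cleaner and what actually makes the constants come out.
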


\begin{proof}
Recall that $\Gamma_\R(s)$ satisfies the recurrence, reflection, and duplication
formulas
$$s \Gamma_\R(s) = 2\pi \Gamma_\R(2+s),
\quad \Gamma_\R(s)\Gamma_\R(2-s) = \frac{1}{\sin(\frac{\pi}{2}s)},
\quad \text{and} \quad \Gamma_\R(s)\Gamma_\R(1+s) = 2^{1-s} \Gamma_\R(2s).$$
Hence, for $\epsilon\in\{0,1\}$ and $t\in\R$,
\begin{align*}
  \abs{\frac{\Gamma_\R(2+\epsilon-it)}{\Gamma_\R(1+\epsilon+it)}}^2
  = \left.\begin{cases}
    \frac{t^2 \cosh(\frac{\pi}{2}t)}{2\pi t \sinh(\frac{\pi}{2}t)}
    & \text{for }\epsilon=0
    \\
    \frac{(1+t^2) \sinh(\frac{\pi}{2}t)}{2\pi t \cosh(\frac{\pi}{2}t)}
    & \text{for }\epsilon=1
  \end{cases}\right\}
  \leq \frac{(1+\epsilon)^2+t^2}{\pi^2},
\end{align*}
which yields
$$\abs{\chi(s,f)\cc{Q(1-\cc{s},f)}} \leq 4N^{-1/2}\abs{Q(s,f)}$$
for $\Re s=1$.

On the critical line we have
$$\abs{\chi(s,f)\frac{\cc{Q(1-\cc{s},f)}}{Q(s,f)}}_{\Re s=\frac{1}{2}} = 1,$$
and by the Phragm\'en-Lindel\"of theorem,
$$\sup_{\frac{1}{2}\leq\Re s\leq1}\abs{\chi(s,f)\frac{\cc{Q(1-\cc{s},f)}}{Q(s,f)}}
\leq \max\{4N^{-1/2},1\} \leq 4.$$
Thus, in the strip $\frac{1}{2}\leq\Re s<1$ we have
$$\abs{\frac{Q(s,f)^2}{\,\cc{Q(1-\cc{s},f)}\,}} \leq
N\left( \frac{\abs{\Im s}+D_{s,f}}{2\pi} \right)^2.$$
Using the Kim--Sarnak estimate \cite{KimSarnak2003}
$p^{-\vartheta}\leq\abs{\alpha_p}\leq p^{\vartheta}$ with
$\vartheta=\frac{7}{64}$ in the Euler product gives
$$
\sup_{\Re s=\frac{3}{2}}\abs{L(s,f)} = \sup_{\Re s=\frac{3}{2}}
\abs{\prod_p\frac{1}{(1-\alpha_pp^{-s})(1-\alpha_p^{-1}p^{-s})}}
\leq \zeta(\tfrac{3}{2}+\vartheta)\zeta(\tfrac{3}{2}-\vartheta) < 3\pi.
$$
Inserting the last three bounds in Lemma~\ref{lem:Booker4.1} yields the
corollary.
\end{proof}

\begin{lemma}\label{lem:gamma}
For $s=\sigma+it$ with $0<\sigma\leq1$ and $0<\theta<\delta<\frac{\pi}{2}$,
$$\abs{\gamma_\theta(\sigma+it,f)} <
E_{\sigma,\theta,\delta} e^{-(\delta-\theta)\abs{t}}$$
with
\begin{align*}
E_{\sigma,\theta,\delta}
:= \frac{\abs{c_\theta(\sigma,f)}}{(\cos(\delta-\theta))^{1/2}}
\left\{
\frac{\cosh(1)\sigma^{-1}(\sigma^{-1}+\log(2)+e^{-1})}{(2\pi)^{\sigma}
  (\cos(\delta-\theta)\cos\theta)^{\sigma-1/2}}
+ \frac{\Gamma(\sigma-\frac{1}{2},
  \frac{\cos\delta}{\cos(\delta-\theta)\cos\theta})}{
  2(2\pi\cos\delta)^{\sigma-1/2}} \right\}.
\end{align*}
\end{lemma}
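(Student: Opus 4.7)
The plan is to begin from the integral representation
\begin{equation*}
\gamma_\theta(s,f)=c_\theta(s,f)\int_0^\infty g(u)\,u^{s-1/2}\frac{du}{u},\qquad
g(u):=(\cos\theta\,u)^{1/2}K_{ir}(2\pi\cos\theta\,u)\cos^{(-\epsilon)}(2\pi\sin\theta\,u),
\end{equation*}
obtained from \eqref{Lambda} by extracting the $n=1$ contribution (equivalently, by dividing both sides of \eqref{Lambda} by $L(s,f)$). The exponential factor $e^{-(\delta-\theta)|t|}$ will come from rotating the contour of integration, and the prefactor $E_{\sigma,\theta,\delta}$ from carefully bounding the remaining real integral.

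First I would shift the contour from $(0,\infty)$ to $e^{i\phi}(0,\infty)$ with $\phi=\operatorname{sgn}(t)(\delta-\theta)$. The integrand is holomorphic in the sector $|\arg u|<\tfrac{\pi}{2}-\theta$, since $K_{ir}(z)$ is holomorphic for $|\arg z|<\pi$ and the other factors are entire (or principal-branch) functions, so Cauchy's theorem applies once the two arcs vanish. Near $0$ one has $g(u)=O(|u|^{1/2})$ up to a mild logarithmic factor from $K_{ir}$, so $g(u)u^{\sigma-3/2}$ is integrable for $\sigma>0$; near $\infty$ the asymptotic $K_{ir}(z)\sim\sqrt{\pi/(2z)}\,e^{-z}$ combined with $|\cos^{(-\epsilon)}(z)|\leq\cosh|\Im z|$ produces decay of order $e^{-2\pi R\cos(\theta+\phi)}$ on the arc of radius $R$, which is genuine since $\theta+|\phi|=\delta<\tfrac{\pi}{2}$. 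Substituting $u=e^{i\phi}v$ extracts $|e^{i\phi(s-1/2)}|=e^{-(\delta-\theta)|t|}$, and noting $|c_\theta(\sigma+it,f)|=|c_\theta(\sigma,f)|$ reduces matters to bounding
\begin{equation*}
I(\sigma):=\int_0^\infty(\cos\theta\,v)^{1/2}\bigl|K_{ir}(2\pi\cos\theta\,e^{i\phi}v)\bigr|\bigl|\cos^{(-\epsilon)}(2\pi\sin\theta\,e^{i\phi}v)\bigr|\,v^{\sigma-3/2}\,dv.
\end{equation*}

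Next I would apply $|K_{ir}(z)|\leq K_0(\Re z)$ (immediate from $K_\nu(z)=\int_0^\infty e^{-z\cosh\tau}\cosh(\nu\tau)\,d\tau$ with $\nu=ir$) together with $|\cos^{(-\epsilon)}(a+ib)|\leq\cosh|b|$, and split $I(\sigma)$ at $v_0:=1/(2\pi\cos\theta\cos(\delta-\theta))$, chosen so that the argument of $K_0$ equals $1$ at $v_0$. On $(0,v_0]$, the identity $\cos\delta=\cos\theta\cos(\delta-\theta)-\sin\theta\sin(\delta-\theta)>0$ forces $2\pi\sin\theta\sin(\delta-\theta)v\leq 1$, so the hyperbolic factor is bounded by $\cosh(1)$; combining with the elementary bound $K_0(x)\leq\log(2/x)+e^{-1}$ on $(0,1]$ and the substitution $x=2\pi\cos\theta\cos(\delta-\theta)v$ reduces the piece to $\int_0^1 x^{\sigma-1}(\log(2/x)+e^{-1})\,dx=\sigma^{-1}(\sigma^{-1}+\log 2+e^{-1})$, which after bookkeeping of the prefactors $(\cos\theta)^{1/2}$ and $(2\pi\cos\theta\cos(\delta-\theta))^{-\sigma}$ matches the first summand of $E_{\sigma,\theta,\delta}$. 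On $[v_0,\infty)$, the bound $K_0(x)\leq\sqrt{\pi/(2x)}\,e^{-x}$ valid on $[1,\infty)$ together with $\cosh y\leq e^{|y|}$ collapses the two exponents into $e^{-2\pi\cos\delta\,v}$ via the same angle-sum identity, and the substitution $x=2\pi\cos\delta\,v$ identifies the tail integral with $\Gamma(\sigma-\tfrac12,\cos\delta/(\cos\theta\cos(\delta-\theta)))/[2(2\pi\cos\delta)^{\sigma-1/2}(\cos(\delta-\theta))^{1/2}]$, which reproduces the second summand of $E_{\sigma,\theta,\delta}$.

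The main obstacle is less conceptual than bookkeeping: pinning down the explicit constants $\cosh(1)$, $\log 2$, and $e^{-1}$ exactly requires the sharp piecewise bounds $K_0(x)\leq\log(2/x)+e^{-1}$ on $(0,1]$ and $K_0(x)\leq\sqrt{\pi/(2x)}\,e^{-x}$ on $[1,\infty)$. Both follow from the integral representation of $K_0$ together with its power series $K_0(x)=-\log(x/2)-\gamma+O(x^2\log x)$ and its standard asymptotic expansion, but must be checked at $x=1$ to verify that the constants line up. Modulo these auxiliary estimates, the proof is just a contour rotation followed by routine integration.
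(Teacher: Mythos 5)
Your proof is correct and follows essentially the same strategy as the paper: rotating the contour by $\delta-\theta$ (you do it in the multiplicative variable $v$, the paper shifts by $i(\delta-\theta)$ in $u=\log v$, which is the same thing), extracting $e^{-(\delta-\theta)|t|}$, splitting the remaining integral at the point where the real part of the $K$-Bessel argument equals $1$, and bounding the two pieces with exactly the estimates $\abs{K_{ir}(z)}\leq\log(2/\Re z)+e^{-1}$, $\cosh$ of something $\leq1$ on the small piece, and $\abs{K_{ir}(z)}<\sqrt{\pi/(2\Re z)}\,e^{-\Re z}$, $\abs{\cos^{(-\epsilon)}(z)}\leq e^{|\Im z|}$ on the tail. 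The only cosmetic difference is that the paper cites the $K$-Bessel bounds directly from \cite{BookerStrombergssonThen2013} rather than deriving them from $\abs{K_{ir}(z)}\leq K_0(\Re z)$ as you propose.
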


\begin{proof}
For $\gamma_\theta$ we have the integral representation~\eqref{gamma a}.
Since $\abs{\gamma_\theta(\sigma-it,f)} = \abs{\gamma_\theta(\sigma+it,f)}$,
it is enough to prove the lemma for non-negative $t$.

Making the change of variables $u\mapsto u+i(\delta-\theta)$
and moving the contour
of integration back to the real line, we get
\begin{align*}
  g(t) := \gamma_\theta(\sigma+it,f) =
  \int_{\R} \hat{g}(u) e^{iut}\,du =
  \int_{\R} \hat{g}(u+i(\delta-\theta)) e^{i(u+i(\delta-\theta))t}\,du,
\end{align*}
and
\begin{align*}
  \abs{g(t)} \leq e^{-(\delta-\theta)t} \left\{
  \int_{\{u\in\R:2\pi e^u\cos(\delta-\theta)\cos\theta < 1\} \hspace*{-9em}}
  \abs{\hat{g}(u+i(\delta-\theta))}\,du \ \quad
  +
  \int_{\{u\in\R:2\pi e^u\cos(\delta-\theta)\cos\theta \geq 1\} \hspace*{-9em}}
  \abs{\hat{g}(u+i(\delta-\theta))}\,du \ \quad
\right\}.
\end{align*}
We bound the first integral using the estimates
$\abs{K_{ir}(z)} \leq \log\left(\frac{2}{\Re z}\right)+e^{-1}$
\cite[p.~106]{BookerStrombergssonThen2013} and
$\abs{\cos^{(-\epsilon)}(\tan(\delta-\theta)\tan(\theta)\Im z)}\leq\cosh(1)$,
and the second integral using
$\abs{K_{ir}(z)}<\left(\frac{\pi}{2\Re z}\right)^{1/2} e^{-\Re z}$ and
$\abs{\cos^{(-\epsilon)}(z)} \leq e^{\abs{\Im z}}$.
\end{proof}

\begin{lemma}\label{lem:sum gamma}
For $s=\sigma+it$ with $0<\sigma\leq1$,
$\abs{t}=\frac{\abs{m}}{A}\leq T\leq\frac{B}{2}$, and
$0<\theta<\delta<\frac{\pi}{2}$,
\begin{align*}
  \abs{\sum_{k\neq0}
	\gamma_\theta\!\left(\sigma+i\left(\frac{m}{A}+kB\right),f\right)}
  < \frac{E_{\sigma,\theta,\delta}}{\sinh((\delta-\theta)\frac{B}{2})}.
\end{align*}
\end{lemma}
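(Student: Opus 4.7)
The plan is to bound each summand using Lemma~\ref{lem:gamma} and then sum a geometric series. By that lemma, for every $k\neq 0$ we have
$$\left|\gamma_\theta\!\left(\sigma+i\!\left(\tfrac{m}{A}+kB\right),f\right)\right|
< E_{\sigma,\theta,\delta}\,e^{-(\delta-\theta)\left|\frac{m}{A}+kB\right|},$$
so the task reduces to estimating $\sum_{k\neq 0}e^{-(\delta-\theta)|m/A+kB|}$.

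Next I would exploit the hypothesis $|m/A|\le T\le B/2$ to obtain a clean lower bound on $|m/A+kB|$. For $k\ge 1$ the fact that $m/A\ge -B/2$ gives $m/A+kB\ge (k-\tfrac{1}{2})B$; symmetrically, for $k\le -1$ we get $m/A+kB\le (k+\tfrac{1}{2})B$. Combining the two cases yields the uniform lower bound
$$\left|\tfrac{m}{A}+kB\right|\ \ge\ \bigl(|k|-\tfrac{1}{2}\bigr)B
\qquad(k\neq 0).$$

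With this in hand the sum becomes a straightforward geometric series:
$$\sum_{k\neq 0}e^{-(\delta-\theta)\left|\frac{m}{A}+kB\right|}
\ \le\ 2\sum_{k=1}^{\infty}e^{-(\delta-\theta)(k-\frac{1}{2})B}
\ =\ \frac{2e^{-(\delta-\theta)B/2}}{1-e^{-(\delta-\theta)B}}
\ =\ \frac{1}{\sinh\bigl((\delta-\theta)\tfrac{B}{2}\bigr)},$$
after multiplying numerator and denominator by $e^{(\delta-\theta)B/2}$. Multiplying by $E_{\sigma,\theta,\delta}$ yields the stated estimate, and the strict inequality is inherited from the strict inequality in Lemma~\ref{lem:gamma} (each term is strictly dominated by the corresponding term of an absolutely convergent majorant).

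There is really no substantive obstacle here: the main step of genuine content, namely the exponential decay $|\gamma_\theta(\sigma+it,f)|<E_{\sigma,\theta,\delta}e^{-(\delta-\theta)|t|}$ obtained by shifting the Fourier contour to $\Im u=\delta-\theta$, has already been carried out in Lemma~\ref{lem:gamma}. The only mildly delicate point is verifying the lower bound $|m/A+kB|\ge(|k|-\tfrac{1}{2})B$, which is what forces the hypothesis $T\le B/2$ and produces exactly the $\sinh$ in the denominator via the geometric sum.
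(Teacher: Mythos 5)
Your proof is correct and follows exactly the paper's own argument: apply Lemma~\ref{lem:gamma}, use $\abs{\frac{m}{A}+kB}\geq(\abs{k}-\frac{1}{2})B$ (which is where $T\leq B/2$ enters), and sum the resulting geometric series to obtain $1/\sinh((\delta-\theta)\frac{B}{2})$. You have simply written out the geometric-series evaluation that the paper leaves implicit.
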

\begin{proof}
Using Lemma~\ref{lem:gamma} together with
$\abs{\frac{m}{A}+kB}\geq(\abs{k}-\frac{1}{2})B$ yields the stated
bound.
\end{proof}

\begin{lemma}\label{lem:sum Lambda}
For $s=\sigma+it$ with $\frac{1}{2}\leq\sigma<1$,
$\abs{t}=\frac{\abs{m}}{A}\leq T\leq\frac{B}{2}$, and
$0<\theta<\delta<\frac{\pi}{2}$,
\begin{align*}
  \abs{\sum_{k\neq0}
	\Lambda_\theta\!\left(\sigma+i\left(\frac{m}{A}+kB\right),f\right)}
  < \frac{3N^{1/2}E_{\sigma,\theta,\delta}}{\sinh((\delta-\theta)\frac{B}{2})}
  \left( \frac{B}{2}+D_{\sigma,f}+\frac{B}{1-e^{-(\delta-\theta)B}} \right).
\end{align*}
\end{lemma}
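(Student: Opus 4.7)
The plan is to combine the two estimates already established: the $\Gamma$-factor bound from Lemma~\ref{lem:gamma} and the $L$-function bound from Corollary~\ref{cor:L}. Since $\Lambda_\theta(s,f)=\gamma_\theta(s,f)L(s,f)$ and $\frac{1}{2}\leq\sigma<1$ lies in the strip where both estimates apply, for any $k\neq0$ we have
$$\bigl|\Lambda_\theta(\sigma+i(m/A+kB),f)\bigr|
< 3N^{1/2}E_{\sigma,\theta,\delta}\,
e^{-(\delta-\theta)\abs{m/A+kB}}\bigl(\abs{m/A+kB}+D_{\sigma,f}\bigr).$$
So after factoring out the constants, the whole problem reduces to bounding the tail sum $\sum_{k\neq0}e^{-\alpha\abs{x_k}}(\abs{x_k}+D_{\sigma,f})$, where I write $\alpha=\delta-\theta$ and $x_k=m/A+kB$.

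Next I would use the hypothesis $\abs{m/A}\leq T\leq B/2$, which by the triangle inequality gives the two-sided bound $(\abs{k}-\tfrac{1}{2})B\leq\abs{x_k}\leq(\abs{k}+\tfrac{1}{2})B$ for every $k\neq0$. This lets me replace $e^{-\alpha\abs{x_k}}$ by $e^{-\alpha(\abs{k}-1/2)B}$ and $\abs{x_k}$ by $(\abs{k}+\tfrac{1}{2})B$, and then split the sum into the part proportional to $D_{\sigma,f}$ and the part proportional to $\abs{x_k}$. The first part is a standard geometric sum:
$$\sum_{k\neq0}e^{-\alpha(\abs{k}-1/2)B}
=\frac{2e^{-\alpha B/2}}{1-e^{-\alpha B}}=\frac{1}{\sinh(\alpha B/2)}.$$
For the second part I use $\sum_{k\geq1}e^{-\alpha k B}=e^{-\alpha B}/(1-e^{-\alpha B})$ and $\sum_{k\geq1}ke^{-\alpha k B}=e^{-\alpha B}/(1-e^{-\alpha B})^2$ to write the bound as
$$2Be^{\alpha B/2}\cdot\frac{e^{-\alpha B}}{1-e^{-\alpha B}}
\left(\frac{1}{1-e^{-\alpha B}}+\frac{1}{2}\right)
=\frac{B}{\sinh(\alpha B/2)}\left(\frac{1}{1-e^{-\alpha B}}+\frac{1}{2}\right).$$
Adding the two pieces collects the factor $1/\sinh(\alpha B/2)$ and leaves the bracketed quantity $D_{\sigma,f}+B/2+B/(1-e^{-\alpha B})$, which is exactly the expression in the statement.

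There is no real obstacle; the whole argument is bookkeeping. The only thing that needs a moment of care is verifying that the arguments $\sigma+i(m/A+kB)$ stay in the strip of validity of Corollary~\ref{cor:L} (they do, since $\sigma$ is unchanged) and that the two-sided inequality for $\abs{x_k}$ is applied in the right direction in each of the two sums, so that one ends up with an upper bound throughout. Multiplying the tail sum by $3N^{1/2}E_{\sigma,\theta,\delta}$ then yields the claimed inequality.
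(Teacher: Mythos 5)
Your proof is correct and takes essentially the same approach as the paper: bound each term of the sum by combining Corollary~\ref{cor:L} and Lemma~\ref{lem:gamma} (this is exactly \eqref{Lambda bound} in the paper), replace $\abs{m/A+kB}$ using the two-sided inequality $(\abs{k}-\tfrac{1}{2})B\leq\abs{m/A+kB}\leq(\abs{k}+\tfrac{1}{2})B$, and sum the resulting geometric series. The paper's proof is terse and simply says ``summing up results in the stated bound''; your computation spells out that bookkeeping explicitly and matches the claimed constant.
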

\begin{proof}
By Corollary~\ref{cor:L} and Lemma~\ref{lem:gamma},
\begin{align}\label{Lambda bound}
  \abs{\Lambda_\theta(\sigma+it,f)} < 3N^{1/2}(\abs{t}+D_{\sigma,f})
  E_{\sigma,\theta,\delta}e^{-(\delta-\theta)\abs{t}}.
\end{align}
Applying the estimate
$(\abs{k}-\frac{1}{2})B\leq\abs{\frac{m}{A}+kB}\leq(\abs{k}+\frac{1}{2})B$
and summing up results in the stated bound.
\end{proof}

\begin{lemma}\label{lem:^Lambda}
For $\frac{1}{2}\leq\sigma\leq1$, $B>0$,
$C\geq\frac{B}{2\pi}\log\frac{1+B/(2\pi)}{4\pi\cos\theta}$,
$C'=C+\frac{B}{2\pi}\log N$, and
$\hat{\Lambda}_{\sigma,\theta}(u,f):=c_\theta(\sigma,f)f(ie^{i\theta}e^u)
e^{u(\sigma-1/2)}$,
$$\abs{\frac{2\pi}{B} \sum_{\{l\in\Z:l<-C'\text{ or }l>C\}}
e\!\left(\frac{ml}{q}\right)
  \hat{\Lambda}_{\sigma,\theta}\!\left(\frac{2\pi l}{B},f\right)}
< \frac{2\pi}{B} \frac{56 N^{1/4}}{(2\pi\tan\theta)^\epsilon}
e^{\pi\frac{C}{B} - 2\pi\cos(\theta)e^{2\pi\frac{C}{B}}}.$$
\end{lemma}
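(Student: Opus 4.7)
My plan is to split the tail into the two ranges $l>C$ and $l<-C'$, estimating $f(ie^{i\theta}e^u)$ directly in the first via the Fourier expansion of $f$, and applying the Fricke involution~\eqref{e:fricke} in the second to reduce it to the first.

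\emph{Range $l>C$.} Writing $u=2\pi l/B$, the point $z=ie^{i\theta}e^u$ has imaginary part $y_l:=e^u\cos\theta$. Because the argument of $\cos^{(-\epsilon)}$ is real, the Fourier expansion gives $\abs{f(z)}\leq\sqrt{y_l}\sum_{n\geq1}\abs{a_n}\abs{K_{ir}(2\pi n y_l)}$. The hypothesis $C\geq\tfrac{B}{2\pi}\log\tfrac{1+B/(2\pi)}{4\pi\cos\theta}$ yields $2\pi y_l\geq\tfrac{1}{2}(1+B/(2\pi))>\tfrac{1}{2}$, so I invoke the bound $\abs{K_{ir}(z)}<\sqrt{\pi/(2\Re z)}\,e^{-\Re z}$ from \cite{BookerStrombergssonThen2013} to conclude $\sqrt{y_l}\,\abs{K_{ir}(2\pi n y_l)}\leq\tfrac{1}{2\sqrt{n}}e^{-2\pi n y_l}$. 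Together with $a_1=1$ and the Kim--Sarnak bound $\abs{a_n}\leq n^{7/64}$ for $n\geq2$, the $n$-sum collapses to a constant multiple of its leading term, giving $\abs{f(z)}\leq c_1 e^{-2\pi y_l}$ for an absolute constant $c_1$.

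\emph{Range $l<-C'$.} Using~\eqref{e:fricke}, $\abs{f(ie^{i\theta}e^u)}=\abs{f(ie^{-i\theta}e^{-u}/N)}$, and the image point has imaginary part $\cos(\theta)e^{-u}/N$. The choice $C'=C+\tfrac{B}{2\pi}\log N$ is engineered so that this imaginary part exceeds $\cos(\theta)e^{2\pi C/B}$ whenever $l<-C'$, reducing this range to the bound just established.

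\emph{Assembly and summation.} Using $\abs{c_\theta(\sigma,f)}\leq 4N^{1/4}/(2\pi\tan\theta)^\epsilon$ for $\tfrac{1}{2}\leq\sigma\leq1$, together with the elementary bounds $\abs{e^{u(\sigma-1/2)}}\leq e^{u/2}$ for $u>0$ (from $\sigma\leq1$) and $\abs{e^{u(\sigma-1/2)}}\leq1$ for $u<0$ (from $\sigma\geq\tfrac{1}{2}$), each tail reduces to a series of the shape $\sum_{l\geq\lceil C\rceil}e^{\pi l/B}e^{-2\pi\cos(\theta)e^{2\pi l/B}}$ for the positive range, or $\sum_{l'\geq\lceil C'\rceil}e^{-2\pi\cos(\theta)e^{2\pi l'/B}/N}$ for the Fricke-transformed negative range. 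Differentiating the logarithm of the positive-range summand and invoking the hypothesis on $C$ shows that the summand decreases at least geometrically with ratio $\leq e^{-1/2}$ from $l=\lceil C\rceil$ onward, so each series is bounded by a small absolute multiple of its leading term $e^{\pi C/B}e^{-2\pi\cos(\theta)e^{2\pi C/B}}$; the negative-side sum lacks the $e^{\pi C/B}$ factor and is therefore absorbed. Multiplying by $\tfrac{2\pi}{B}$ yields the asserted bound.

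The main obstacle is the numerical bookkeeping: one has to track the constants from $\abs{c_\theta}$, from the ratio of the geometric series in $l$, from the $n\geq2$ tail of the Fourier expansion, and from the sum of the two ranges $l>C$ and $l<-C'$, and verify that they combine into the explicit constant $56N^{1/4}$ rather than something larger.
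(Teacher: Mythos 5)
Your approach mirrors the paper's: split the tail at $l>C$ and $l<-C'$, bound $\abs{f}$ via its Fourier expansion and $K$-Bessel decay on the positive side, apply the Fricke involution~\eqref{e:fricke} on the negative side, and estimate via a geometric series whose ratio is controlled by the hypothesis on $C$. Two points need repair. First, the Kim--Sarnak result bounds the Satake parameters, $\abs{\alpha_p}\leq p^{7/64}$, not the Hecke eigenvalues; for composite $n$ one only gets $\abs{a_n}\leq d(n)n^{7/64}$, so the assertion $\abs{a_n}\leq n^{7/64}$ is false. The paper instead uses the cruder but multiplicatively clean bound $\abs{a_n}\leq 2n^{1/2}$, which combined with $\sqrt{y}\,\abs{K_{ir}(2\pi ny)}<\tfrac{1}{2\sqrt{n}}e^{-2\pi ny}$ gives the closed form $\abs{f(x+iy)}<\tfrac{1}{e^{2\pi y}-1}$, avoiding any $n$-tail estimation. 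Second, you correctly identify but do not carry out the constant tracking, which is the whole content of the lemma: the paper's argument produces $4N^{1/4}/(2\pi\tan\theta)^\epsilon$ from $\abs{c_\theta}$ together with the $f$-bound, and a factor $7$ from the two geometric denominators $(1-e^{-ae^{u_0}})^{-1}(1-e^{-(ae^{u_0}-\frac12)\frac{2\pi}{B}})^{-1}$, valid because the hypothesis on $C$ is engineered precisely to give $\frac{2\pi}{B}(2\pi\cos\theta\, e^{2\pi C/B}-\tfrac12)\geq\tfrac12$; the two ranges contribute $4\cdot 7=28$ each, hence $56$. Without this step, the stated inequality with its explicit constant is not established.
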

\begin{proof}
Applying $\abs{a_n}\leq2n^{1/2}$, $\abs{K_{ir}(y)}<(\frac{\pi}{2y})^{1/2}e^{-y}$,
and $\abs{\cos^{(-\epsilon)}(x)}\leq1$ in the Fourier expansion of the Maass
form gives
$$\abs{f(x+iy)}<\frac{1}{e^{2\pi y}-1},$$
and by Fricke involution
\begin{align}\label{f bound}
  \abs{f(ie^{i\theta}e^u)}=\abs{\cc{f(ie^{-i\theta}e^{-u}/N)}}
  <\frac{1}{e^{2\pi\cos(\theta)\max\{e^u,e^{-u}/N\}}-1}.
\end{align}

For $\sigma\leq1$,
$$\abs{\frac{2\pi}{B} \sum_{l>C} e\!\left(\frac{ml}{q}\right)
  \hat{\Lambda}_{\sigma,\theta}\!\left(\frac{2\pi l}{B},f\right)}
< \frac{2\pi}{B} \sum_{l>C} \frac{4N^{1/4}}{(2\pi\tan\theta)^\epsilon}
\frac{e^{\pi\frac{l}{B}}}{e^{2\pi\cos(\theta)e^{2\pi l/B}}-1}.$$
Writing $a:=2\pi\cos\theta$, $u:=2\pi\frac{l}{B}$, $u_0:=2\pi\frac{C}{B}$,
with $a>0$ and $u_0<u$, we have
$$\frac{e^{-a e^u+\frac{u}{2}}}{1-e^{-a e^u}}
< \frac{e^{-a e^{u_0}(1+u-u_0)+\frac{u}{2}}}{1-e^{-a e^{u_0}}}.$$
Summing the resulting geometric series gives
\begin{align*}
  \abs{\frac{2\pi}{B} \sum_{l>C} e\!\left(\frac{ml}{q}\right)
    \hat{\Lambda}_{\sigma,\theta}\!\left(\frac{2\pi l}{B},f\right)}
  & < \frac{2\pi}{B} \frac{4N^{1/4}}{(2\pi\tan\theta)^\epsilon}
  \frac{e^{-a e^{u_0}+\frac{u_0}{2}}}{(1-e^{-a e^{u_0}})
    (1-e^{-(a e^{u_0}-\frac{1}{2})\frac{2\pi}{B}})}
  \\
  & < \frac{2\pi}{B} \frac{4N^{1/4}}{(2\pi\tan\theta)^\epsilon}
  7 e^{-a e^{u_0}+\frac{u_0}{2}}
\end{align*}
for $\frac{2\pi}{B}(a e^{u_0}-\frac{1}{2}) \geq \frac{1}{2}$,
and similarly for the sum over $l<-C$.

For $\frac{1}{2}\leq\sigma\leq1$,
$$\abs{\frac{2\pi}{B} \sum_{l<-C'} e\!\left(\frac{ml}{q}\right)
  \hat{\Lambda}_{\sigma,\theta}\!\left(\frac{2\pi l}{B},f\right)}
< \frac{2\pi}{B} \sum_{l>C'} \frac{4N^{1/4}}{(2\pi\tan\theta)^\epsilon}
\frac{1}{e^{2\pi\cos(\theta)e^{2\pi l/B}/N}-1}.$$
Writing $\frac{a}{N}:=\frac{2\pi\cos\theta}{N}$, $u:=2\pi\frac{\abs{l}}{B}$,
$u_0':=u_0+\log N=2\pi\frac{C'}{B}$, with $a>0$ and $u_0'<u$, we have
$$\frac{e^{-\frac{a}{N}e^u}}{1-e^{-\frac{a}{N}e^u}}
< \frac{e^{-\frac{a}{N}e^{u_0'}(1+u-u_0')}}{1-e^{-\frac{a}{N}e^{u_0'}}},$$
and summing up gives
$$\abs{\frac{2\pi}{B} \sum_{l<-C'} e\!\left(\frac{ml}{q}\right)
  \hat{\Lambda}_{\sigma,\theta}\!\left(\frac{2\pi l}{B},f\right)}
< \frac{2\pi}{B} \frac{4N^{1/4}}{(2\pi\tan\theta)^\epsilon}
\frac{e^{-a e^{u_0}}}{(1-e^{-a e^{u_0}}) (1-e^{-a e^{u_0}\frac{2\pi}{B}})}.
$$
\end{proof}

\begin{lemma}\label{lem:^gamma}
For $\frac{1}{2}\leq\sigma\leq1$, $B>0$,
$C\geq\frac{B}{2\pi}\log\frac{1+B/(2\pi)}{4\pi\cos\theta}$,
$C'\geq\frac{3B}{2\pi}$, and
\\
$\hat{\gamma}_{\sigma,\theta}(u,f) := c_\theta(\sigma,f) (\cos(\theta)e^u)^{1/2}
K_{ir}(2\pi\cos(\theta)e^u) \cos^{(-\epsilon)}(2\pi\sin(\theta)e^u)
e^{u(\sigma-1/2)}$,
\begin{multline*}
  \abs{\frac{2\pi}{B} \sum_{\{l\in\Z:l<-C'\text{ or }l>C\}}
	e\!\left(\frac{ml}{q}\right)
    \hat{\gamma}_{\sigma,\theta}\!\left(\frac{2\pi l}{B},f\right)}
  \\
  < \frac{2\pi}{B} \frac{N^{1/4}}{(2\pi\tan\theta)^\epsilon}
  \left\{
  6 e^{\pi\frac{C}{B}-2\pi\cos(\theta)e^{2\pi\frac{C}{B}}}
  + 23 (\cos\theta)^{1/6} \sech\!\left(\frac{\pi r}{2}\right)
	e^{-\frac{\pi C'}{3B}}
  \right\}.
\end{multline*}
\end{lemma}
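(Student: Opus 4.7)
The approach mirrors the proof of the preceding lemma for $\hat{\Lambda}$: I will split the tail sum into its $l > C$ and $l < -C'$ parts and bound each one using the appropriate asymptotics of $K_{ir}$. The structural difference from that proof is that the Hecke-coefficient estimate is absent here (we are bounding the $\Gamma$-factor integrand directly), but in exchange we now need a small-argument bound on $K_{ir}$ for the $l < -C'$ tail, since the Fricke symmetry that controlled $f(ie^{i\theta}e^u)$ for $u\to-\infty$ in Lemma~\ref{lem:^Lambda} is not available for the raw $\Gamma$-factor.

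For $l > C$, we have $u := 2\pi l/B \geq 2\pi C/B$, and by the hypothesis on $C$ the argument $y := 2\pi\cos(\theta)e^u$ is large. Using $\abs{K_{ir}(y)} < (\pi/(2y))^{1/2}e^{-y}$ and $\abs{\cos^{(-\epsilon)}(2\pi\sin(\theta)e^u)} \leq 1$, the bound on $\abs{\hat{\gamma}_{\sigma,\theta}(u,f)}$ takes exactly the same shape as the bound on $\abs{\hat{\Lambda}_{\sigma,\theta}(u,f)}$ from the $l>C$ step of Lemma~\ref{lem:^Lambda} (modulo the absence of the Hecke sum, which only changes the multiplicative constant). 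Linearizing the exponent $-2\pi\cos(\theta)e^u$ convexly around $u_0 := 2\pi C/B$, summing the resulting geometric series in $l$, and using the lower bound on $C$ to keep the geometric-series denominators under control, produces the first term $6 e^{\pi C/B - 2\pi\cos(\theta)e^{2\pi C/B}}$.

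For $l < -C'$, the hypothesis $C' \geq 3B/(2\pi)$ forces $u \leq -3$, so $y = 2\pi\cos(\theta)e^u < 2\pi e^{-3}$ is small. In this regime I invoke a uniform bound of the form $\abs{K_{ir}(y)} \leq c_0\,\sech(\pi r/2)\,y^{-1/3}$ from \cite{BookerStrombergssonThen2013}. Combined with the explicit factor $(\cos(\theta)e^u)^{1/2}$ in $\hat{\gamma}_{\sigma,\theta}$, this bounds the integrand by a constant times $(\cos\theta)^{1/6}\sech(\pi r/2)\,e^{u/6}$, after using $\abs{\cos^{(-\epsilon)}(x)} \leq 1$ and $e^{u(\sigma-1/2)} \leq 1$ (valid for $\sigma \geq 1/2$ and $u<0$). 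Summing the resulting geometric series $\sum_{l>C'} e^{-\pi l/(3B)}$ yields the tail $e^{-\pi C'/(3B)}(e^{\pi/(3B)}-1)^{-1}$; multiplying by $2\pi/B$, absorbing $(e^{\pi/(3B)}-1)^{-1} \leq 3B/\pi$, and bundling all remaining absolute constants (including $N^{1/4}$ and the $1/(2\pi\tan\theta)^\epsilon$ extracted from $c_\theta(\sigma,f)$) into the numerical factor $23$ gives the second term.

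The main obstacle is locating the correct small-$y$ bound on $K_{ir}$ with precisely the $\sech(\pi r/2)$ factor: a naive application of the ascending power series of $K_{ir}$ yields a bound involving $\abs{\Gamma(1\pm ir)}^{-1}$, which has the right $e^{-\pi\abs{r}/2}$ exponential decay but differs from $\sech(\pi r/2)$ by a polynomial-in-$r$ factor. Once the correct uniform bound is in hand, the rest of the argument is routine geometric-series bookkeeping strictly parallel to that of Lemma~\ref{lem:^Lambda}.
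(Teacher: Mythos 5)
Your proposal matches the paper's proof in structure and in every key ingredient. Both split the tail at $l>C$ and $l<-C'$; for $l>C$ both use $\abs{K_{ir}(y)}<(\pi/(2y))^{1/2}e^{-y}$ so that the $(\cos\theta\,e^u)^{1/2}$ factor cancels, then linearize the double-exponential and sum the geometric series exactly as in Lemma~\ref{lem:^Lambda}; for $l<-C'$ both correctly observe that the Fricke trick used for $\hat\Lambda$ is unavailable and replace it with the uniform bound $\cosh(\tfrac{\pi r}{2})\abs{K_{ir}(y)}<4y^{-1/3}$ from \cite[p.~107]{BookerStrombergssonThen2013}, which after absorbing $(\cos\theta\,e^u)^{1/2}$ and $e^{u(\sigma-1/2)}\le1$ leaves the simple geometric series $\sum_{l>C'}e^{-\pi l/(3B)}$. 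The ``obstacle'' you flag is exactly right: the $\sech(\pi r/2)$-uniform small-argument bound is the one non-routine input, and you identify the correct source for it, so the remainder is routine bookkeeping as you say.
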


\begin{proof}
We have $\abs{K_{ir}(y)}<(\frac{\pi}{2y})^{1/2}e^{-y}$ and
$\abs{\cos^{(-\epsilon)}(x)}\leq1$, so that
$$\abs{\frac{2\pi}{B} \sum_{l>C} e\!\left(\frac{ml}{q}\right)
  \hat{\gamma}_{\sigma,\theta}\!\left(\frac{2\pi l}{B},f\right)}
< \frac{2\pi}{B} \sum_{l>C} \frac{4N^{1/4}}{(2\pi\tan\theta)^\epsilon}
2^{-1} e^{-2\pi\cos(\theta)e^{2\pi\frac{l}{B}}} e^{\pi\frac{l}{B}}.$$
Writing $a:=2\pi\cos\theta$, $u:=2\pi\frac{l}{B}$, $u_0:=2\pi\frac{C}{B}$,
with $a>0$ and $u_0<u$, we have
$$e^{-a e^u + \frac{u}{2}} < e^{-a e^{u_0}(1+u-u_0) + \frac{u}{2}},$$
and summing the geometric series yields
\begin{align*}
  \abs{\frac{2\pi}{B} \sum_{l>C} e\!\left(\frac{ml}{q}\right)
    \hat{\gamma}_{\sigma,\theta}\!\left(\frac{2\pi l}{B},f\right)}
  & < \frac{2\pi}{B} \frac{4N^{1/4}}{(2\pi\tan\theta)^\epsilon} 2^{-1}
  \frac{e^{-a e^{u_0} + \frac{u_0}{2}}}{1 - e^{-(a e^u - \frac{1}{2})\frac{2\pi}{B}}}
  \\
  & < \frac{2\pi}{B} \frac{6N^{1/4}}{(2\pi\tan\theta)^\epsilon}
  e^{-a e^{u_0} + \frac{u_0}{2}}
\end{align*}
for $\frac{2\pi}{B}(a e^{u_0}-\frac{1}{2})\geq\frac{1}{2}$.

The argument is slightly different for the sum over $l<-C$.
Using that
$\abs{\cosh(\frac{\pi r}{2})K_{ir}(y)}<4y^{-1/3}$
\cite[p.~107]{BookerStrombergssonThen2013} and
$\abs{\cos^{(-\epsilon)}(x)}\leq1$, we have
\begin{align*}
  \abs{\frac{2\pi}{B} \sum_{l<-C'} e\!\left(\frac{ml}{q}\right)
    \hat{\gamma}_{\sigma,\theta}\!\left(\frac{2\pi l}{B},f\right)}
  & < \frac{2\pi}{B} \sum_{l>C'} \frac{4N^{1/4}}{(2\pi\tan\theta)^\epsilon}
  \frac{4(2\pi)^{-1/3}}{\cosh(\frac{\pi r}{2})}
  (\cos(\theta)e^{-\frac{2\pi\abs{l}}{B}})^{1/6}
  \\
  & < \frac{2\pi}{B} \frac{23N^{1/4}}{(2\pi\tan\theta)^\epsilon}
  (\cos\theta)^{1/6} \sech\!\left(\frac{\pi r}{2}\right) e^{-\frac{\pi C'}{3B}}
\end{align*}
for  $\frac{\pi C'}{3B}\geq\frac{1}{2}$.
\end{proof}

The following two Lemmata show that we can circumvent division by zero in
$L(s,f)=\frac{\Lambda_\theta(s,f)}{\gamma_\theta(s,f)}$.

\begin{lemma}\label{lem:psi lemma}
For any $y\in\R$ with $\abs{y}>5$, $\abs{\Gamma_\R(x+iy)}$ is an increasing
function of $x\in\R$.
\end{lemma}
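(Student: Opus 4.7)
The plan is to reduce the monotonicity claim to a pointwise lower bound for $\Re\psi$, where $\psi=\Gamma'/\Gamma$ is the digamma function. Since $\Gamma_\R(s)=\pi^{-s/2}\Gamma(s/2)$ has no zeros and all its poles lie on the real axis, $|\Gamma_\R(x+iy)|$ is smooth and positive in $x$ whenever $|y|>5$, so log-differentiation along the horizontal line $\Im s=y$ gives
$$\frac{\partial}{\partial x}\log|\Gamma_\R(x+iy)| = -\tfrac{1}{2}\log\pi + \tfrac{1}{2}\Re\psi\!\left(\tfrac{x+iy}{2}\right).$$
Thus the lemma is equivalent to the inequality
$$\Re\psi(z)\ge\log\pi \qquad\text{for every }z\in\C\text{ with }|\Im z|>5/2.$$

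For the right half-plane $\Re z\ge 0$, I would invoke Binet's second formula
$$\psi(z)=\log z-\frac{1}{2z}-2\int_0^\infty\frac{t\,dt}{(t^2+z^2)(e^{2\pi t}-1)},$$
which upon taking real parts reads $\Re\psi(z)=\log|z|-\Re z/(2|z|^2)-R(z)$. Since $|t^2+z^2|\ge 2t|\Im z|$ when $\Re z\ge 0$, the remainder is bounded by $|R(z)|\le (|\Im z|)^{-1}\int_0^\infty dt/(e^{2\pi t}-1)$, an explicit $O(1/|\Im z|)$. For the left half-plane $\Re z<0$ I would instead use the reflection formula $\psi(z)=\psi(1-z)-\pi\cot(\pi z)$; the elementary bound $|\cot(\pi z)\pm i|\le 2e^{-\pi|\Im z|}$ shows $\Re(\pi\cot(\pi z))$ is of order $e^{-5\pi/2}$ and thus negligible, reducing the claim for $z$ to the right-half-plane estimate applied to $1-z$, which satisfies $\Re(1-z)\ge 1$.

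The main obstacle is the tightness of the inequality just above the boundary $|\Im z|=5/2$, since there $\log|z|$ is close to $\log(5/2)\approx 0.916$, considerably below $\log\pi\approx 1.145$. To close this gap I would retain the next term of the Stirling expansion: a quick computation gives $\Re[-B_2/(12z^2)]=(|z|^2-2(\Re z)^2)/(12|z|^4)$, which is positive as long as $|\Re z|\le|\Im z|$ and contributes roughly $1/(12|z|^2)$ when $\Re z$ is small. Combined with a rigorous tail bound for the Binet integral and a termwise $B_4,B_6,\dots$ control (or equivalently a truncated-Stirling-with-remainder argument), this should close the inequality uniformly. Producing a fully uniform estimate near the boundary is the delicate part; once this pointwise bound on $\Re\psi$ is in hand, the log-derivative reduction in the first paragraph immediately gives the monotonicity.
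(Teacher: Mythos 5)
Your reduction is exactly the paper's: monotonicity of $|\Gamma_\R(x+iy)|$ in $x$ is equivalent to $\Re\psi_\R(x+iy)>0$, i.e.\ to $\Re\psi(z)\ge\log\pi$ for $z=\tfrac{x+iy}{2}$ in the half-plane $|\Im z|>\tfrac52$, and like the paper you split into a Binet estimate for $\Re z$ away from $0$ and a reflection-formula argument for the rest. You have also correctly located the pressure point: the leading term $\log|z|$ is well below $\log\pi$ near $|\Im z|=\tfrac52$. The difficulty you flag, however, is not ``delicate'' but insurmountable, because the inequality is false there. Using $\Re\psi(iu)=-\gamma+u^2\sum_{k\ge1}\tfrac{1}{k(k^2+u^2)}$ one finds $\Re\psi(\tfrac52 i)\approx 0.930$, a deficit of roughly $0.21$ against $\log\pi\approx 1.145$; the $B_2$ term you propose to retain contributes only $\tfrac{1}{12|z|^2}\approx 0.013$ and every higher Bernoulli term together adds less than $0.0003$, so no amount of Stirling bookkeeping can close the gap. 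Equivalently, from $|\Gamma_\R(iy)|^2=\pi/\bigl((y/2)\sinh(\pi y/2)\bigr)$ and $|\Gamma_\R(1+iy)|^2=1/\cosh(\pi y/2)$ one gets $|\Gamma_\R(1+iy)|^2/|\Gamma_\R(iy)|^2=\tfrac{y}{2\pi}\tanh(\tfrac{\pi y}{2})$, which is $<1$ for all $y<2\pi$ up to an exponentially small correction; so $|\Gamma_\R(x+iy)|$ actually \emph{decreases} from $x=0$ to $x=1$ when $5<|y|\lesssim 2\pi$. The lemma's constant $5$ should be something like $2\pi$, and neither your argument nor any correct argument can prove the statement as written.

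Two further local problems. Your estimate $|t^2+z^2|\ge 2t|\Im z|$ for $\Re z\ge 0$ is false (take $z$ purely imaginary and $t=|\Im z|$); indeed Binet's second formula requires $\Re z>0$ strictly and its remainder diverges as $\Re z\to 0^+$, so that representation cannot reach the imaginary axis without first shifting via $\psi(z)=\psi(z+1)-1/z$. The paper instead uses a Laplace-transform (Binet first) representation of $\psi_\R$, which avoids this; but be aware that the identity displayed there, and the quoted ``known value'' $\psi_\R(1)=-\gamma-\tfrac12\log(16\pi)\approx -2.536$, are not consistent with $\psi_\R:=\Gamma_\R'/\Gamma_\R$, for which $\psi_\R(1)=-\tfrac{\gamma}{2}-\log 2-\tfrac12\log\pi\approx -1.554$. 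These normalization slips are precisely what make the paper's proof appear to succeed at $y=5$ when the underlying inequality does not hold there, so reproducing the paper's steps would not have rescued your argument.
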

\begin{proof}
It suffices to show that $\frac{\partial}{\partial x}\log\abs{\Gamma_\R(x+iy)}
= \Re\psi_\R(x+iy) > 0$.
Suppose first that $x\geq1$.  Then, differentiating Binet's formula for
$\log\Gamma_\R$, we have
\begin{align*}
  \Re\psi_\R(x+iy)
  & = \frac{1}{2} \log(x^2+y^2) - \frac{x}{x^2+y^2} - \frac{1}{2}\log(4\pi)
  - \int_0^\infty \left( 1 - \frac{1}{t} + \frac{2}{e^{2t}-1} \right)
  e^{-xt} \cos(yt) dt
  \\
  & \geq \frac{1}{2}\log(x^2+y^2) - \frac{x}{x^2+y^2} + \psi_\R(1) + 1.
\end{align*}
For $x\geq1$, this is easily seen to be minimum at $x=1$, so we obtain
$$\Re\psi_\R(x+iy)
\geq \frac{1}{2}\log(1+y^2) - \frac{1}{1+y^2} + \psi_\R(1) + 1.$$

For $x\leq1$ we use the reflection formula
$\psi_\R(z)=\psi_\R(2-z)-\pi\cot(\frac{\pi}{2}z)$ to see that
$$\Re\psi_\R(x+iy)
= \Re\psi_\R(2-(x+iy)) - \Re\pi\cot(\tfrac{\pi}{2}(x+iy)),$$
and apply the above to obtain a bound for $\Re\psi_\R(2-(x+iy))$.
We calculate that
$$\Re\cot(\tfrac{\pi}{2}(x+iy))
= \frac{2e^{\pi y}\sin(\pi x)}{1 - 2e^{\pi y}\cos(\pi x) + e^{2\pi y}},$$
and with a little calculus we see that this is bounded in modulus by
$1/\sinh(\pi\abs{y})$.  Thus, altogether we have
$$\Re\psi_\R(x+iy)
\geq \frac{1}{2}\log(1+y^2) - \frac{1}{1+y^2} - \frac{\pi}{\sinh(\pi\abs{y})}
+ \psi_\R(1) + 1$$
for all $x\in\R$ and $y\neq0$.  Note that the right-hand side is strictly
increasing for $y>0$.  Using the known value
$\psi_\R(1)=-\gamma-\frac{1}{2}\log(16\pi)=-2.53587\ldots$,
it is straightforward to see that $\Re\psi_\R(x+iy)$ is positive for
$\abs{y}>5$.
\end{proof}

\begin{lemma}\label{lem:min gamma}
For $r>5$, $\frac{1}{2}\leq\sigma\leq1$,
$0<\theta_1<\theta_2<\frac{\pi}{2}$,
$\cos\theta_1\leq(4+\abs{t^2-r^2})^{-1/2}$, and
$\cos\theta_2=e^{-\frac{\pi}{2r}}\cos\theta_1$,
$$\max\abs{\gamma_{\theta_1}(s,f),\gamma_{\theta_2}(s,f)}
\geq \frac{2}{3} (\cos\theta_2)^{\frac{1}{2}+\epsilon}(2\pi)^{-\epsilon}
\left( \frac{\pi}{r\sinh(\pi r)} \right)^{1/2}.$$
\end{lemma}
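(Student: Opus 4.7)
\smallskip\noindent\emph{Proof plan.}
The key idea is to exploit the large-argument asymptotic of the Gauss hypergeometric function, which splits $\gamma_\theta(s,f)$ into two pieces with opposing $\theta$-dependent phases $e^{\pm ir\log\tan\theta}$; the prescribed ratio $\cos\theta_2/\cos\theta_1=e^{-\pi/(2r)}$ then introduces essentially a quarter-turn phase mismatch between these at $\theta_1$ versus $\theta_2$, forcing at least one of $\abs{\gamma_{\theta_1}}$, $\abs{\gamma_{\theta_2}}$ to be large.

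First I would apply the standard $z\to\infty$ connection formula for $\F{\alpha}{\beta}{\gamma}{z}$, with $\alpha,\beta=(s+\epsilon\pm ir)/2$, $\gamma=1/2+\epsilon$, $z=-\tan^2\theta$. The hypothesis $\cos\theta_1\leq(4+\abs{t^2-r^2})^{-1/2}$ gives $\tan^2\theta_1\geq 3+\abs{t^2-r^2}$, which is the natural scale of $\abs{\alpha\beta}$, so the residual factors $\F{\cdot}{\cdot}{\cdot}{1/z}$ are close to~$1$ with quantifiable error. Substituting into the definition of $\gamma_\theta$, using the identity $(\cos\theta)^{1/2-s}(\tan\theta)^{-(s+\epsilon)}=(\cos\theta)^{1/2+\epsilon}(\sin\theta)^{-(s+\epsilon)}$, and telescoping via $\Gamma_\R(s)=\pi^{-s/2}\Gamma(s/2)$ yields
\begin{equation*}
\gamma_\theta(s,f) = (\cos\theta)^{1/2+\epsilon}(\sin\theta)^{-(s+\epsilon)} M(s)\bigl[P\,e^{-ir\phi} + Q\,e^{ir\phi}\bigr] + E_\theta(s),
\end{equation*}
where $\phi:=\log\tan\theta$, $M(s)$ is a $\theta$-independent amplitude, and the reduced coefficients $P$, $Q$ each contain a factor $\abs{\Gamma(\mp ir)}=\sqrt{\pi/(r\sinh\pi r)}$ together with a bounded ratio of $\Gamma$-values at $(s+\epsilon\pm ir)/2$ and $(1+\epsilon-s\pm ir)/2$.

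A short computation from the definition of $\theta_2$ gives $r(\phi_2-\phi_1)=\pi/2+\eta$ with $\eta:=r\log(\sin\theta_2/\sin\theta_1)=O(r\cos^2\theta_1)\leq\pi/8$. Writing $X:=Pe^{-ir\phi_1}$ and $Y:=Qe^{ir\phi_1}$, the leading parts of $\gamma_{\theta_1}$ and $\gamma_{\theta_2}$ become (up to the respective prefactors $F_j:=(\cos\theta_j)^{1/2+\epsilon}(\sin\theta_j)^{-(s+\epsilon)}M(s)$) $X+Y$ and $i(Ye^{i\eta}-Xe^{-i\eta})$; the $2\times 2$ linear map $(X,Y)\mapsto(X+Y,\,i(Ye^{i\eta}-Xe^{-i\eta}))$ has smallest singular value $\sqrt{2(1-\abs{\sin\eta})}$. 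Since $\abs{F_1}\geq\abs{F_2}$ (both factors $(\cos\theta)^{1/2+\epsilon}$ and $(\sin\theta)^{-(\sigma+\epsilon)}$ favor $\theta_1$), this yields
\begin{equation*}
\max(\abs{\gamma_{\theta_1}},\abs{\gamma_{\theta_2}}) \geq (\cos\theta_2)^{1/2+\epsilon}\sqrt{1-\abs{\sin\eta}}\,\abs{M(s)}\max(\abs{P},\abs{Q})\,(1+o(1)),
\end{equation*}
after absorbing $(\sin\theta_2)^{-(\sigma+\epsilon)}\approx 1$ into the error.

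Finally I would identify $\abs{M(s)}\max(\abs{P},\abs{Q})$ with $\tfrac{2}{3}(2\pi)^{-\epsilon}\sqrt{\pi/(r\sinh\pi r)}$, with enough slack to absorb the three error sources, by tracking constants through the $\Gamma_\R$--$\Gamma$ simplifications: the $(2\pi)^{-\epsilon}$ absorbs $\Gamma(1/2+\epsilon)$ with the ambient $\pi$-powers, and the $\sqrt{\pi/(r\sinh\pi r)}=\abs{\Gamma(\pm ir)}$ is explicit in $P$, $Q$. For $\sigma\neq\tfrac12$, the residual $\Gamma$-ratio in $P$, $Q$ is bounded using the monotonicity of $\abs{\Gamma_\R(x+iy)}$ from Lemma~\ref{lem:psi lemma}; the hypothesis $r>5$ is used here to ensure $\max(\abs{t+r},\abs{t-r})>5$, so that the lemma applies to at least one of the $\Gamma_\R$-factors. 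The main obstacle will be the bookkeeping in this last step: bounding the hypergeometric-asymptotic error, the phase error $\eta$, and the $\Gamma$-ratio error tightly enough to extract the clean constant $\tfrac{2}{3}$.
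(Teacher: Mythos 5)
Your high-level strategy --- apply a hypergeometric connection formula to split $\gamma_\theta$ into two pieces with opposing $\theta$-dependent phases, identify the amplitude $\sqrt{\pi/(r\sinh\pi r)}$ as $\abs{\Gamma(\pm ir)}$, invoke Lemma~\ref{lem:psi lemma} together with $r>5$ to control the residual $\Gamma$-ratio, and exploit the prescribed ratio $\cos\theta_2/\cos\theta_1$ to force a phase mismatch --- is exactly the paper's (following Sarnak). But the specific connection formula you choose does not work. The $z\mapsto 1/z$ formula converts the first piece of $\F{\alpha}{\beta}{\gamma}{z}$ into a residual $\F{\alpha}{1-\gamma+\alpha}{1-\beta+\alpha}{1/z}$, and with $\alpha=\frac{s+\epsilon+ir}{2}$, $\gamma=\frac{1}{2}+\epsilon$, \emph{both} numerator parameters $\alpha$ and $1-\gamma+\alpha=\frac{1-\epsilon+s+ir}{2}$ have imaginary part $\approx(t+r)/2$, while the denominator parameter is $1-\beta+\alpha=ir+1$. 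The first correction is therefore of size roughly $\frac{(t+r)^2}{4r}\cot^2\theta_1$, and the hypothesis gives only $\cot^2\theta_1\leq 1/(3+\abs{t^2-r^2})$; so when $t$ is near $r$ --- the very regime for which the lemma is needed --- this is about $r/3$, and the residual series is not close to $1$ (its terms behave like $(r\cot^2\theta)^n/n!$, summing to roughly $e^{r/3}$). Your remark that $\abs{\alpha\beta}$ is the ``natural scale'' is the slip: $\alpha\beta\sim(r^2-t^2)/4$ governs the \emph{original} series, not the residual one produced by this transformation.

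The paper instead uses \cite[Sec.~9.132, Eq.~1]{GradshteynRyzhik2007}, the $z\mapsto 1/(1-z)$ transformation, which with $z=-\tan^2\theta$ produces a residual $\F{\alpha}{\gamma-\beta}{\alpha-\beta+1}{\cos^2\theta}$. Crucially $\gamma-\beta=\frac{1-s+\epsilon+ir}{2}$ has imaginary part $\approx-(t-r)/2$, so the numerator product now involves $(t+r)(t-r)=t^2-r^2$, and the hypothesis $\cos^2\theta\leq(4+\abs{t^2-r^2})^{-1}$ makes each term of the residual series at most $4^{-n}$ in modulus, yielding the clean factor $2/3$. This transformation also delivers the phase as $(\cos\theta)^{\pm ir}$ rather than $(\tan\theta)^{\pm ir}$, so $\cos\theta_2=e^{-\pi/(2r)}\cos\theta_1$ gives the \emph{exact} sign flip $(\cos\theta_2)^{2ir}=-(\cos\theta_1)^{2ir}$, and the residual phase $\eta$ and the $\sqrt{1-\abs{\sin\eta}}$ singular-value loss you had to introduce never arise. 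Rerunning your computation with this transformation in place of the $1/z$ one should close the argument.
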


\begin{proof}
We follow the proof of \cite{Sarnak1985}, generalizing it and making the implied
constants explicit.  Using \cite[Sec.~9.132, Eq.~1]{GradshteynRyzhik2007} and
writing
$$g_\theta(s,f) := \Gamma_\R(s+\epsilon+ir)
\frac{\Gamma_\R(-2ir)\Gamma_\R(1+2\epsilon)}{\Gamma_\R(1-s+\epsilon-ir)}
\F{\frac{s+\epsilon+ir}{2}}{\frac{1-s+\epsilon+ir}{2}}{ir+1}{\cos^2\theta},$$
we get
\begin{align}\label{eq:gamma_theta}
  \gamma_\theta(s,f)
  = i^{-\epsilon} w^{-1/2} N^{\frac{1}{2}(s-\frac{1}{2})}
  \left\{ (\cos\theta)^{\frac{1}{2}+\epsilon+ir} g_\theta(s,f)
  + (\cos\theta)^{\frac{1}{2}+\epsilon-ir} \cc{g_\theta(\cc{s},f)} \right\}.
\end{align}
By Lemma~\ref{lem:psi lemma}, for $\sigma\geq\frac{1}{2}$ and $\abs{t+r}>5$,
$$\abs{\frac{\Gamma_\R(s+\epsilon+ir)}{\Gamma_\R(1-s+\epsilon-ir)}}
= \abs{\frac{\Gamma_\R(2\sigma-1+1-\sigma+\epsilon+i(t+r))}{
    \Gamma_\R(1-\sigma+\epsilon-i(t+r))}} \geq 1.$$
Next,
$$\abs{\Gamma_\R(-2ir)\Gamma_\R(1+2\epsilon)}
= (2\pi)^{-\epsilon} \left( \frac{\pi}{r\sinh(\pi r)} \right)^{1/2},$$
and for $\frac{1}{2}\leq\sigma\leq1$,
$$\abs{\frac{(\frac{s+\epsilon+ir}{2}+n)(\frac{1-s+\epsilon+ir}{2}+n)}{
    (ir+1+n)(1+n)}} < 1+\frac{\abs{t^2-r^2}}{4}.$$
Hence for $\cos^2\theta\leq(4+\abs{t^2-r^2})^{-1}$,
$$\F{\frac{s+\epsilon+ir}{2}}{\frac{1-s+\epsilon+ir}{2}}{ir+1}{\cos^2\theta}
= 1 + \sum_{n=1}^\infty \Theta\left(\frac{1}{4^n}\right)
= 1 + \Theta\left( \frac{1/4}{1-1/4} \right),$$
where $\Theta(x)$ stands for a value of absolute size at most $x$.

$r>5$ implies that $\abs{t+r}>5$ or $\abs{t-r}>5$.  Thus
$$\max\abs{g_\theta(s,f),\cc{g_\theta(\cc{s},f)}}
\geq (2\pi)^{-\epsilon} \left( \frac{\pi}{r\sinh(\pi r)} \right)^{1/2}
(1-\Theta(\tfrac{1}{3})).$$
Adjusting the phase factor $(\cos\theta)^{ir}$ in \eqref{eq:gamma_theta}
suitably, i.e.\ taking $\theta=\theta_1$ and $\theta=\theta_2$ with
$(\cos\theta_2)^{2ir}=e^{-i\pi}(\cos\theta_1)^{2ir}$, respectively,
completes the proof.
\end{proof}

\section{Interpolating zeros}\label{s:zeros}

We compute values of $L$ on a grid, but we are ultimately interested in
the zeros, which are not regularly spaced.
To zoom in on the zeros, we interpolate
$$h(t) := e^{-\frac{(t-t_0)^2}{2b^2}} g(t)$$
with $g(t)=\Lambda_\theta(\sigma+it,f)$ and $g(t)=\gamma_\theta(\sigma+it,f)$,
respectively.
The function $h$ has the advantage that it decays rapidly at $\infty$
and is approximately bandlimited,
which allows us to use the Whittaker--Shannon sampling theorem \cite{Brown1967}
$$\abs{h(t) - \sum_{m\in\Z} h\!\left(\frac{m}{A}\right)
  \sinc\!\left(\pi A\left(t-\frac{m}{A}\right)\right)}
\leq 4\int_{\pi A}^\infty \abs{\hat{h}(u)}\,du.$$
Truncating the sum over $m$ and bounding the error of truncation,
we get an effective interpolation formula for $h$.

\begin{lemma}\label{lem:sum Lambda truncated}
For $g(t):=\Lambda_\theta(\sigma+it,f)$,
$G:=\max\limits_{\abs{t}\leq T}\abs{g(t)}$,
$\frac{1}{A}\leq J\leq T-\abs{t_0}$, and $b>0$,
\begin{multline*}
  \abs{\sum_{\abs{\frac{m}{A}-t_0}>J} h\!\left(\frac{m}{A}\right)
    \sinc\left(\pi A\left(t-\frac{m}{A}\right)\right)}
  < \sqrt{2\pi}bA\left\{ G\erfc\left(\frac{J-\frac{1}{A}}{\sqrt{2}b}\right)
  \right. \\ \left.
  + 3N^{1/2}E_{\sigma,\theta,\delta}e^{-(\delta-\theta)T}
  \left\{
  \frac{2b}{\sqrt{2\pi}} e^{-\frac{(T-\abs{t_0}-\frac{1}{A})^2}{2b^2}}
  + (\abs{t_0}+D_{\sigma,f})
  \erfc\left(\frac{T-\abs{t_0}-\frac{1}{A}}{\sqrt{2}b}\right)
  \right\}
  \right\}.
\end{multline*}
\end{lemma}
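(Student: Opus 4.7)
The plan is to use $|\sinc(\cdot)| \leq 1$ to reduce to bounding $\sum_{|m/A - t_0| > J} |h(m/A)|$, where $|h(m/A)| = e^{-(m/A - t_0)^2/(2b^2)} |g(m/A)|$, and then to split this sum according to whether $|m/A|$ lies inside or outside $[-T, T]$. The hypothesis $J \leq T - |t_0|$ together with $|m/A - t_0| \geq |m/A| - |t_0|$ yields the inclusion $\{|m/A| > T\} \subseteq \{|m/A - t_0| > J\}$, so the set of summation partitions cleanly into an ``inner'' part $S_1 = \{|m/A - t_0| > J,\ |m/A| \leq T\}$ and an ``outer'' part $S_2 = \{|m/A| > T\}$, which we treat with different pointwise bounds on $|g|$.

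For $S_1$, replace $|g(m/A)|$ by its supremum $G$ and compare the remaining Gaussian sum to an integral: since $x \mapsto e^{-(x-t_0)^2/(2b^2)}$ is monotone on $(t_0 + J, \infty)$ and on $(-\infty, t_0 - J)$ (valid because $J \geq 1/A$), one obtains
$$\sum_{|m/A - t_0| > J} e^{-(m/A - t_0)^2/(2b^2)} \leq 2A \int_{J - 1/A}^{\infty} e^{-y^2/(2b^2)}\,dy = \sqrt{2\pi}\,bA\,\erfc\!\left(\frac{J - 1/A}{\sqrt{2}\,b}\right),$$
which accounts for the first term of the claimed bound.

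For $S_2$, I would invoke the pointwise estimate~\eqref{Lambda bound} from the proof of Lemma~\ref{lem:sum Lambda}, namely $|g(m/A)| < 3N^{1/2}(|m/A| + D_{\sigma,f}) E_{\sigma,\theta,\delta} e^{-(\delta - \theta)|m/A|}$. Using $e^{-(\delta-\theta)|m/A|} \leq e^{-(\delta - \theta)T}$ to pull out a uniform decay factor, substituting $y = m/A - t_0$, and applying $|m/A| \leq |y| + |t_0|$ reduces matters to bounding
$$\sum_{|y| > T - |t_0|} (|y| + |t_0| + D_{\sigma,f})\, e^{-y^2/(2b^2)}.$$
The constant contribution $(|t_0| + D_{\sigma,f})\sum e^{-y^2/(2b^2)}$ produces an $\erfc$ term of the same form as in $S_1$, giving the last summand inside the braces. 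The piece $\sum |y|\, e^{-y^2/(2b^2)}$ is bounded by $2A\int_{T - |t_0| - 1/A}^\infty y\,e^{-y^2/(2b^2)}\,dy = 2A b^2\, e^{-(T - |t_0| - 1/A)^2/(2b^2)}$, which, after extracting the common prefactor $\sqrt{2\pi}\,bA$, matches the Gaussian term $\frac{2b}{\sqrt{2\pi}} e^{-(T - |t_0| - 1/A)^2/(2b^2)}$.

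The main obstacle is technical rather than conceptual: one must verify that the sum-to-integral comparisons are taken on regions where the integrand is monotone. For the Gaussian alone this only needs $J \geq 1/A$ (given), but for $|y|\,e^{-y^2/(2b^2)}$ the monotonicity kicks in only for $|y| > b$, imposing the implicit condition $T - |t_0| - 1/A > b$ that is consistent with the regime of practical interest. Assembling the two contributions $S_1$ and $S_2$ and factoring out the common $\sqrt{2\pi}\,bA$ then yields precisely the bound stated in the lemma.
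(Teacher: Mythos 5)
Your proof follows the same approach as the paper's: bound $\abs{\sinc(\cdot)}\leq 1$, split the summation set at $\abs{m/A}=T$, bound $\abs{g}$ by $G$ on the inner range and by the pointwise estimate~\eqref{Lambda bound} on the outer range, then compare each piece to an integral and evaluate in terms of $\erfc$ and a Gaussian tail. The monotonicity caveat you flag for the $\abs{y}e^{-y^2/(2b^2)}$ sum-to-integral step is a fair observation, but the paper leaves it equally implicit (it writes the same comparison without imposing anything like $T-\abs{t_0}-\tfrac{1}{A}\geq b$), so your argument is not weaker than the paper's---only more candid about a side condition that holds whenever the resulting bound is of any use.
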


\begin{proof}
For $\abs{t}\leq T$, we have $\abs{g(t)}\leq G$,
and for $\abs{t}>T$, we bound $\abs{g(t)}$ by
\eqref{Lambda bound}.  Hence,
\begin{align*}
  & \abs{\sum_{\abs{\frac{m}{A}-t_0}>J} h\!\left(\frac{m}{A}\right)
    \sinc\left(\pi A\left(t-\frac{m}{A}\right)\right)}
  \leq \sum_{\abs{\frac{m}{A}-t_0}>J}  e^{-\frac{(\frac{m}{A}-t_0)^2}{2b^2}}
  \abs{g\!\left(\frac{m}{A}\right)}
  \\
  & < \sum_{\substack{\abs{\frac{m}{A}-t_0}>J \\ \abs{\frac{m}{A}}\leq T}}
  e^{-\frac{(\frac{m}{A}-t_0)^2}{2b^2}} G
  + \sum_{\substack{\abs{\frac{m}{A}-t_0}>J \\ \abs{\frac{m}{A}}>T}}
  e^{-\frac{(\frac{m}{A}-t_0)^2}{2b^2}}
	3N^{1/2}\left(\abs{\frac{m}{A}}+D_{\sigma,f}\right)
  E_{\sigma,\theta,\delta}e^{-(\delta-\theta)\abs{\frac{m}{A}}}
  \\
  & < G \int_{\abs{\frac{m}{A}-t_0}>J-\frac{1}{A}}
  e^{-\frac{(\frac{m}{A}-t_0)^2}{2b^2}}\,dm
  + 3N^{1/2} E_{\sigma,\theta,\delta}e^{-(\delta-\theta)T}
  \int_{\abs{\frac{m}{A}}>T-\frac{1}{A}}
  e^{-\frac{(\frac{m}{A}-t_0)^2}{2b^2}}
	\left(\abs{\frac{m}{A}}+D_{\sigma,f}\right)\,dm.
\end{align*}
\end{proof}

\begin{lemma}\label{lem:sum gamma truncated}
For $g(t):=\gamma_\theta(\sigma+it,f)$,
$G:=\max\limits_{\abs{t}\leq T}\abs{g(t)}$,
$\frac{1}{A}\leq J\leq T-\abs{t_0}$, and $b>0$,
\begin{multline*}
  \abs{\sum_{\abs{\frac{m}{A}-t_0}>J} h\!\left(\frac{m}{A}\right)
    \sinc\left(\pi A\left(t-\frac{m}{A}\right)\right)}
  < \sqrt{2\pi}bA\left\{ G\erfc\left(\frac{J-\frac{1}{A}}{\sqrt{2}b}\right)
  \right. \\ \left.
  + E_{\sigma,\theta,\delta}e^{-(\delta-\theta)T}
  \erfc\left(\frac{T-\abs{t_0}-\frac{1}{A}}{\sqrt{2}b}\right) \right\}.
\end{multline*}
\end{lemma}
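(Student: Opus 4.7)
The plan is to follow the proof of Lemma~\ref{lem:sum Lambda truncated}, streamlined by the clean exponential decay bound for $\gamma_\theta$ supplied by Lemma~\ref{lem:gamma}, which does not carry the polynomial prefactor $3N^{1/2}(\abs{t}+D_{\sigma,f})$ that appeared in the $\Lambda_\theta$ case. Since $\abs{\sinc}\leq 1$ on $\R$, it suffices to bound
$$S:=\sum_{\abs{\frac{m}{A}-t_0}>J} e^{-\frac{(m/A-t_0)^2}{2b^2}}\,\abs{g\!\left(\tfrac{m}{A}\right)}.$$
I would split $S$ according to whether $\abs{m/A}\leq T$ or $\abs{m/A}>T$. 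On the first range I would use the definition of $G$ to obtain $\abs{g(m/A)}\leq G$. On the second, Lemma~\ref{lem:gamma} gives $\abs{g(m/A)} < E_{\sigma,\theta,\delta}\,e^{-(\delta-\theta)\abs{m/A}}$, which I would weaken to $E_{\sigma,\theta,\delta}\,e^{-(\delta-\theta)T}$ using $\abs{m/A}\geq T$ on this range.

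Next, I would dominate each of the two discrete sums by a Gaussian integral. Because $m\mapsto e^{-(m/A-t_0)^2/(2b^2)}$ is monotone decreasing away from its peak at $m/A=t_0$, the usual integral comparison (with a $1/A$ shift to account for the lattice spacing $1/A$) yields
$$\sum_{\abs{\frac{m}{A}-t_0}>J} e^{-\frac{(m/A-t_0)^2}{2b^2}}
\leq \int_{\abs{\frac{m}{A}-t_0}>J-\frac{1}{A}} e^{-\frac{(m/A-t_0)^2}{2b^2}}\,dm,$$
and analogously the tail sum over $\abs{m/A}>T$ is dominated by $\int_{\abs{m/A}>T-1/A} e^{-(m/A-t_0)^2/(2b^2)}\,dm$. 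The hypothesis $J\leq T-\abs{t_0}$ guarantees that $t_0$ lies in $[-(T-1/A),\,T-1/A]$, so both integration regions are genuine symmetric Gaussian tails.

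Finally, I would evaluate the integrals. Substituting $y=m/A-t_0$ (so $dm=A\,dy$) and combining the two symmetric one-sided tails gives
$$\int_{\abs{\frac{m}{A}-t_0}>J-\frac{1}{A}} e^{-\frac{(m/A-t_0)^2}{2b^2}}\,dm
= A\sqrt{2\pi}\,b\,\erfc\!\left(\frac{J-1/A}{\sqrt{2}b}\right).$$
For the second integral the two endpoints sit at distances $T-1/A-t_0$ and $T-1/A+t_0$ from $t_0$, both at least $T-\abs{t_0}-1/A$, so monotonicity of $\erfc$ bounds it by $A\sqrt{2\pi}\,b\,\erfc((T-\abs{t_0}-1/A)/(\sqrt{2}b))$. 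Assembling the two pieces with their prefactors $G$ and $E_{\sigma,\theta,\delta}\,e^{-(\delta-\theta)T}$ produces the stated inequality.

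There is no real obstacle; the argument is strictly easier than that of Lemma~\ref{lem:sum Lambda truncated}, because the missing $(\abs{t}+D_{\sigma,f})$ prefactor eliminates the first-moment Gaussian integral that produced the additional $\frac{2b}{\sqrt{2\pi}}e^{-(T-\abs{t_0}-1/A)^2/(2b^2)}$ term there. The only bookkeeping worth double-checking is the $1/A$ shift in the discrete-to-continuous comparison and the symmetrization of the two Gaussian tails around $t_0$.
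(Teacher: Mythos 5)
Your proposal is correct and takes essentially the same approach as the paper: the paper's proof of this lemma is literally a one-line reference to the proof of the preceding lemma (\ref{lem:sum Lambda truncated}), with the sole change that the tail $\abs{t}>T$ is now bounded by Lemma~\ref{lem:gamma} rather than by \eqref{Lambda bound}. Your observation that the missing polynomial prefactor $(\abs{t}+D_{\sigma,f})$ eliminates the first-moment Gaussian term and makes this case strictly simpler is exactly the right way to see it.
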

\begin{proof}
The proof is similar to that of the previous lemma, except that
for $\abs{t}>T$ we bound $\abs{g(t)}$ by Lemma~\ref{lem:gamma}.
\end{proof}

\begin{lemma}\label{lem:int hatLambda}
For $g(t):=\Lambda_\theta(\sigma+it,f)$, $\frac{1}{2}\leq\sigma\leq1$,
$u_0\geq0$, $\pi A\geq u_0$, and $b>0$,
\begin{multline*}
  4\int_{\pi A}^\infty \abs{\hat{h}(u)}\,du
  < 2\abs{c_\theta(\sigma,f)}
  \frac{e^{u_0(\sigma-\frac{1}{2})}}{1-e^{-2\pi\cos(\theta)/\sqrt{N}}}
  \left\{
  \frac{\sqrt{2\pi}}{2b} \erfc\left(\frac{b}{\sqrt{2}}(\pi A-u_0)\right)
  \right. \\ \left.
  + \frac{2e^{-u_0(\sigma-\frac{1}{2})}}{(2\pi\cos\theta)^{\sigma-\frac{1}{2}}}
  \Gamma\left(\sigma-\tfrac{1}{2},2\pi\cos(\theta)e^{u_0}\right).
  \right\}
\end{multline*}
\end{lemma}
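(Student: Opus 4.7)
The plan is to write $\hat h(u)$ as a Gaussian convolution involving the paper's $\hat\Lambda_{\sigma,\theta}(\cdot,f)$ and then split the resulting integral at $v=u_0$. Computing the Fourier transform of $e^{-(t-t_0)^2/(2b^2)}$ explicitly and combining with the Fourier integral representation \eqref{Lambda} of $\Lambda_\theta(\sigma+it,f)$ gives
\[
|\hat h(u)| \le \frac{b}{\sqrt{2\pi}}\int_\R |\hat\Lambda_{\sigma,\theta}(v,f)|\,e^{-b^2(u-v)^2/2}\,dv,
\]
where $|\hat\Lambda_{\sigma,\theta}(v,f)|=|c_\theta(\sigma,f)|\,|f(ie^{i\theta}e^v)|\,e^{v(\sigma-1/2)}$. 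I would then bound $|f|$ differently on the two ranges using \eqref{f bound}. For $v\le u_0$, combining \eqref{f bound} with the AM--GM inequality $\max\{e^v,e^{-v}/N\}\ge N^{-1/2}$ and the identity $\frac{1}{e^x-1}=\frac{e^{-x}}{1-e^{-x}}\le\frac{1}{1-e^{-x}}$ yields the uniform estimate $|\hat\Lambda_{\sigma,\theta}(v,f)|\le G:=|c_\theta(\sigma,f)|e^{u_0(\sigma-1/2)}/(1-e^{-2\pi\cos\theta/\sqrt{N}})$, using $e^{v(\sigma-1/2)}\le e^{u_0(\sigma-1/2)}$ since $\sigma\ge 1/2$. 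For $v\ge u_0\ge 0$ the maximum equals $e^v$ and the same algebraic manipulation gives $|\hat\Lambda_{\sigma,\theta}(v,f)|\le|c_\theta(\sigma,f)|e^{-2\pi\cos\theta e^v}e^{v(\sigma-1/2)}/(1-e^{-2\pi\cos\theta/\sqrt{N}})$.

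The $v\ge u_0$ contribution is straightforward: swap the order of integration and bound the inner $u$-integral by the full Gaussian $\int_{\pi A}^\infty e^{-b^2(u-v)^2/2}\,du\le\sqrt{2\pi}/b$, which exactly cancels the prefactor $b/\sqrt{2\pi}$. The substitution $s=2\pi\cos\theta\,e^v$ in the remaining integral converts $\int_{u_0}^\infty e^{-2\pi\cos\theta e^v}e^{v(\sigma-1/2)}\,dv$ into $\Gamma(\sigma-\tfrac12,2\pi\cos\theta\,e^{u_0})/(2\pi\cos\theta)^{\sigma-1/2}$, which after multiplication by $4$ matches the second term in the braces of the claimed bound.

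The main obstacle will be the $v\le u_0$ part: neither the bound $e^{-b^2(u-v)^2/2}\le e^{-b^2(u-u_0)^2/2}$ alone (which gives a divergent $v$-integral) nor $e^{-b^2(u-v)^2/2}\le 1$ alone (which produces a $u$-bound not integrable on $[\pi A,\infty)$) suffices. The key trick is the Pythagorean-type inequality
\[
(u-v)^2\ge (u-u_0)^2+(u_0-v)^2\qquad\text{for }v\le u_0\le u,
\]
which is immediate from the nonnegativity of the cross term $2(u-u_0)(u_0-v)$ upon expansion. This decouples the Gaussian into a product, so the $v$-integral contributes the half-Gaussian $\int_{-\infty}^{u_0}e^{-b^2(u_0-v)^2/2}\,dv=\sqrt{2\pi}/(2b)$, giving $|\hat h(u)|_A\le\tfrac{G}{2}e^{-b^2(u-u_0)^2/2}$. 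Integrating over $u\ge\pi A$ then produces $\tfrac{G\sqrt{2\pi}}{4b}\erfc(b(\pi A-u_0)/\sqrt{2})$, and multiplying by $4$ yields the first term in the braces. Summing the two pieces completes the proof.
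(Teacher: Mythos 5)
Your proof is correct and follows essentially the same route as the paper: Gaussian convolution to write $\hat h$ as a smoothing of $\hat\Lambda_{\sigma,\theta}$, the crude bound \eqref{f bound} on $|f|$, a split at $u_0$, and identification of the two resulting pieces with $\erfc$ and an incomplete $\Gamma$-function. The one small difference lies in how you extract the $\erfc$ from the bulk contribution ($v\leq u_0$): the paper interchanges the order of integration at the outset, obtaining $\frac{\sqrt{2\pi}}{2b}\erfc\!\left(\frac{b}{\sqrt2}(\pi A-u)\right)$ from the inner Gaussian and then invoking $\erfc(x)<e^{-x^2}$ for $x>0$ on the range $u<u_0$, whereas you decouple the exponent directly via the inequality $(u-v)^2\geq(u-u_0)^2+(u_0-v)^2$ for $v\leq u_0\leq u$. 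These are equivalent---the standard estimate $\erfc(x)\leq e^{-x^2}$ for $x\geq0$ is itself proved by exactly this expansion of the square---so the two arguments produce numerically identical intermediate estimates and the same final constant.
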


\begin{proof}
By Fourier convolution,
$$\hat{h}(v) = \frac{b}{\sqrt{2\pi}} \int_\R
e^{-\frac{b^2}{2}(v-u)^2-i(v-u)t_0} \hat{g}(u)\,du$$
with $\hat{g}(u):=c_\theta(\sigma,f)f(ie^{i\theta}e^u)e^{u(\sigma-\frac{1}{2})}$,
as defined in (\ref{Lambda}a).
Using \eqref{f bound} and writing $a:=2\pi\cos\theta$ gives
$$\abs{\hat{h}(v)} < \frac{b}{\sqrt{2\pi}} \int_\R e^{-\frac{b^2}{2}(v-u)^2}
\frac{\abs{c_\theta(\sigma,f)}e^{u(\sigma-\frac{1}{2})}}{
  e^{a\max\{e^u,e^{-u}/N\}}-1}\,du.$$
Changing the order of integration, we have
\begin{align*}
  4\int_{\pi A}^\infty \abs{\hat{h}(v)}\,dv
  & < \frac{4b}{\sqrt{2\pi}}\abs{c_\theta(\sigma,f)}
  \int_\R \frac{du \, e^{u(\sigma-\frac{1}{2})}}{e^{a\max\{e^u,e^{-u}/N\}}-1}
  \int_{\pi A}^\infty\,dv \, e^{-\frac{b^2}{2}(v-u)^2}
  \\
  & = 2\abs{c_\theta(\sigma,f)} \int_\R \frac{e^{u(\sigma-\frac{1}{2})}
  \erfc\left(\frac{b}{\sqrt{2}}(\pi
	A-u)\right)}{e^{a\max\{e^u,e^{-u}/N\}}-1}\,du.
\end{align*}

Let $u_0\in[0,\pi A]$ and set $x:=\frac{b}{\sqrt{2}}(\pi A-u)$.
For $u<u_0$, we have $x>0$ and $\erfc(x)<e^{-x^2}$, while for
$u\geq u_0$, $\erfc(x)<2$.
Moreover, for $u<u_0$, $a>0$, $\sigma\geq\frac{1}{2}$,
$$\frac{e^{u(\sigma-\frac{1}{2})}}{e^{a\max\{e^u,e^{-u}/N\}}-1}
< \frac{e^{u_0(\sigma-\frac{1}{2})}}{1-e^{-a/\sqrt{N}}},$$
while for $u\geq u_0$,
$$\frac{1}{e^{a\max\{e^u,e^{-u}/N\}}-1}
< \frac{1}{(1-e^{-a/\sqrt{N}})e^{a e^u}}.$$
Thus,
\begin{multline*}
  4\int_{\pi A}^\infty \abs{\hat{h}(v)}\,dv
  < 2\abs{c_\theta(\sigma,f)}
  \frac{e^{u_0(\sigma-\frac{1}{2})}}{1-e^{-a/\sqrt{N}}}
  \left\{ \int_{-\infty}^{u_0} e^{-\frac{b^2}{2}(\pi A-u)^2}\,du
  + 2e^{-u_0(\sigma-\frac{1}{2})} \int_{u_0}^\infty
  \frac{e^{u(\sigma-\frac{1}{2})}}{e^{a e^u}}\,du \right\}.
\end{multline*}
Identifying the integrals with the complementary error function and the
incomplete gamma function completes the proof.
\end{proof}

\begin{lemma}\label{lem:int hatgamma}
For $g(t):=\gamma_\theta(\sigma+it,f)$, $\frac{1}{2}\leq\sigma\leq1$,
$u_0\geq0$, $\pi A\geq u_0$, and $b>0$,
\begin{multline*}
  4\int_{\pi A}^\infty \abs{\hat{h}(u)}\,du
  < 2\abs{c_\theta(\sigma,f)} \left\{
  \frac{2}{b} e^{u_0(\sigma-\frac{1}{2})} (2\pi\cos(\theta)e^{u_0})^{1/6}
  \sech(\frac{\pi r}{2}) \erfc\left(\frac{b}{\sqrt{2}}(\pi A-u_0)\right)
  \right. \\ \left.
  + (2\pi\cos\theta)^{\frac{1}{2}-\sigma}
  \Gamma(\sigma-\tfrac{1}{2},2\pi\cos(\theta)e^{u_0})
  \right\}.
\end{multline*}
\end{lemma}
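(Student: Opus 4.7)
The plan is to follow the same strategy as in the proof of Lemma~\ref{lem:int hatLambda}: express $\hat{h}$ as a Gaussian convolution of $\hat{g}(u) = c_\theta(\sigma,f)(\cos\theta\, e^u)^{1/2}K_{ir}(2\pi\cos(\theta)e^u)\cos^{(-\epsilon)}(2\pi\sin(\theta)e^u)e^{u(\sigma-1/2)}$ (as in \eqref{gamma a}), swap the order of integration, and reduce to bounding
$$2\int_\R \abs{\hat{g}(u)}\,\erfc\!\left(\tfrac{b}{\sqrt{2}}(\pi A - u)\right) du.$$
I then split this integral at $u_0$, matching the two terms on the right-hand side of the lemma to small- and large-argument estimates for the $K$-Bessel function.

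For $u < u_0$, I apply the small-argument bound $\abs{\cosh(\frac{\pi r}{2})K_{ir}(y)} < 4y^{-1/3}$ from \cite{BookerStrombergssonThen2013} together with $\abs{\cos^{(-\epsilon)}(\cdot)}\le 1$. Combining with the $(\cos\theta\, e^u)^{1/2}$ prefactor and collecting powers yields
$$\abs{\hat{g}(u)} < 4(2\pi)^{-1/2}\abs{c_\theta(\sigma,f)}\sech(\tfrac{\pi r}{2})(2\pi\cos\theta)^{1/6}e^{u(\sigma-1/3)},$$
which since $\sigma-\tfrac{1}{3}\ge\tfrac{1}{6}>0$ is bounded on $(-\infty,u_0]$ by its value at $u_0$; the identity $(2\pi\cos\theta)^{1/6}e^{u_0(\sigma-1/3)} = (2\pi\cos\theta\, e^{u_0})^{1/6}e^{u_0(\sigma-1/2)}$ repackages this into the target form. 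Using $\erfc(x)<e^{-x^2}$ for $x>0$, the remaining Gaussian tail $\int_{-\infty}^{u_0}e^{-\frac{b^2}{2}(\pi A-u)^2}\,du$ evaluates to $\frac{\sqrt{2\pi}}{2b}\erfc(\frac{b}{\sqrt{2}}(\pi A-u_0))$; the constants collapse via $2\cdot 4\cdot(2\pi)^{-1/2}\cdot\frac{\sqrt{2\pi}}{2b} = \frac{4}{b}$ to produce the first term inside the braces.

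For $u\ge u_0$ I apply the large-argument bound $\abs{K_{ir}(y)} < (\pi/(2y))^{1/2}e^{-y}$ and $\abs{\cos^{(-\epsilon)}(\cdot)}\le 1$, which give $\abs{\hat{g}(u)} < \tfrac{1}{2}\abs{c_\theta(\sigma,f)}e^{u(\sigma-1/2)}e^{-2\pi\cos(\theta)e^u}$. Bounding $\erfc<2$ and substituting $w = 2\pi\cos(\theta)e^u$ turns the remaining integral into $(2\pi\cos\theta)^{1/2-\sigma}\Gamma(\sigma-\tfrac{1}{2},2\pi\cos(\theta)e^{u_0})$, yielding the second term. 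Adding the two pieces and pulling out the common factor $2\abs{c_\theta(\sigma,f)}$ gives the claimed inequality. The main obstacle is pure bookkeeping: tracking the fractional powers of $2\pi$ and $\cos\theta$ so that the three factors $(\cos\theta)^{1/2}$, $(2\pi\cos(\theta)e^u)^{-1/3}$ and $e^{u/2}$ assemble into exactly the prefactor $(2\pi\cos(\theta)e^{u_0})^{1/6}e^{u_0(\sigma-1/2)}$ appearing in the lemma.
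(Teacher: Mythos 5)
Your proposal is correct and follows essentially the same strategy as the paper's proof: Fourier convolution, swap of integration order to reduce to $2\int_\R\abs{\hat g(u)}\erfc\bigl(\tfrac{b}{\sqrt2}(\pi A-u)\bigr)\,du$, a split at $u_0$, and then the small-argument bound $\abs{\cosh(\tfrac{\pi r}{2})K_{ir}(y)}<4y^{-1/3}$ with $\erfc(x)\le e^{-x^2}$ on $(-\infty,u_0]$ versus the large-argument bound $\abs{K_{ir}(y)}<(\tfrac{\pi}{2y})^{1/2}e^{-y}$ with $\erfc<2$ on $[u_0,\infty)$, followed by the incomplete-gamma substitution. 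The constant bookkeeping you did (in particular the identity $(2\pi)^{-1/3}(\cos\theta)^{1/6}=(2\pi)^{-1/2}(2\pi\cos\theta)^{1/6}$ and $2\cdot4\cdot(2\pi)^{-1/2}\cdot\tfrac{\sqrt{2\pi}}{2b}=\tfrac{4}{b}$) matches the paper's.
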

\begin{proof}
By Fourier convolution,
$$\hat{h}(v) = \frac{b}{\sqrt{2\pi}} \int_\R
e^{-\frac{b^2}{2}(v-u)^2-i(v-u)t_0} \hat{g}(u)\,du,$$
with $\hat{g}(u) := c_\theta(\sigma,f) (\cos(\theta)e^u)^{1/2}
K_{ir}(2\pi\cos(\theta)e^u) \cos^{(-\epsilon)}(2\pi\sin(\theta)e^u)
e^{u(\sigma-1/2)}$, as in (\ref{gamma}a).
For $u<u_0$, we have
$\abs{\cosh(\frac{\pi r}{2})K_{ir}(y)}<4y^{-1/3}$
\cite[p.~107]{BookerStrombergssonThen2013},
$\abs{\cos^{(-\epsilon)}(x)}\leq1$, and
$e^{u(\sigma-\frac{1}{2})}<e^{u_0(\sigma-\frac{1}{2})}$.
Hence, writing $\frac{a}{2\pi}:=\cos\theta>0$, we have
$$\int_{-\infty}^{u_0} e^{-\frac{b^2}{2}(v-u)^2} \abs{\hat{g}(u)}\,du
< \abs{c_\theta(\sigma,f)} e^{u_0(\sigma-\frac{1}{2})}
\int_{-\infty}^{u_0} e^{-\frac{b^2}{2}(v-u)^2}
\frac{4(2\pi)^{-1/3}}{\cosh(\frac{\pi r}{2})}
\left(\frac{a}{2\pi} e^u\right)^{1/6}\,du.$$
For $u\geq u_0\geq 0$, we use
$\abs{K_{ir}(y)}<(\frac{\pi}{2y})^{1/2}e^{-y}$,
and $\abs{\cos^{(-\epsilon)}(x)}\leq1$ to obtain
$$\int_{u_0}^\infty e^{-\frac{b^2}{2}(v-u)^2} \abs{\hat{g}(u)}\,du
< \abs{c_\theta(\sigma,f)} \int_{u_0}^\infty e^{-\frac{b^2}{2}(v-u)^2}
\frac{1}{2} e^{-a e^u} e^{u(\sigma-\frac{1}{2})}\,du.$$
Therefore, after interchanging the order of integration,
\begin{align*}
  4\int_{\pi A}^\infty \abs{h(v)}\,dv < \frac{4b}{\sqrt{2\pi}}
  \abs{c_\theta(\sigma,f)} \left\{
  e^{u_0(\sigma-\frac{1}{2})} \int_{-\infty}^{u_0}
  \frac{4(2\pi)^{-1/2}}{\cosh(\frac{\pi r}{2})} (a e^u)^{1/6}
  \int_{\pi A}^\infty e^{-\frac{b^2}{2}(v-u)^2}\,dv\,du
  \right. \\ \left.
  + \int_{u_0}^\infty \frac{1}{2} e^{-a e^u} e^{u(\sigma-\frac{1}{2})}
  \int_{\pi A}^\infty e^{-\frac{b^2}{2}(v-u)^2}\,dv\,du\right\}
  \\
  < 2\abs{c_\theta(\sigma,f)} \left\{
  e^{u_0(\sigma-\frac{1}{2})} \frac{4(2\pi)^{-1/2}}{\cosh(\frac{\pi r}{2})}
  (a e^{u_0})^{1/6} \int_{-\infty}^{u_0} e^{-\frac{b^2}{2}(\pi A-u)^2}\,du
  + \int_{u_0}^\infty e^{-a e^u} e^{u(\sigma-\frac{1}{2})}\,du
  \right\},
\end{align*}
where we have employed $\erfc(x)\leq e^{-x^2}$ for $x\geq0$, and $\erfc(x)<2$
otherwise.  Evaluating the integrals completes the proof.
\end{proof}

\section{Detecting zeros}\label{s:detect}

For each Maass form $L$-function under consideration, we compute rigorously
many values on the critical line.  For instance, Figure~\ref{fig:Z} shows
a graph of
$$Z(t,f) := L(\tfrac{1}{2}+it,f) e^{i\arg\gamma(\tfrac{1}{2}+it,f)}$$
for the first even Maass form $L$-function on $\Gamma(1)$.

\begin{figure}
\includegraphics{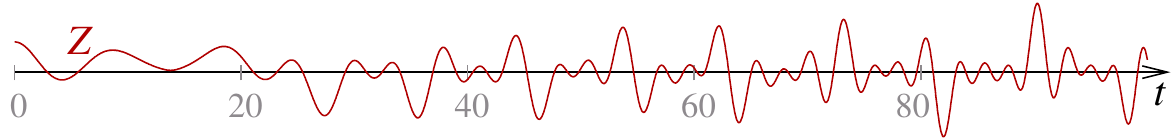}
\caption{\label{fig:Z}Graph of $Z(t,f)$ for the first even Maass form
$L$-function on $\SL(2,\Z)$.}
\end{figure}


\begin{corollary}\label{cor:gamma zeros}
Let $\Omega_{\gamma_\theta} := \{t\in\R:\gamma_\theta(\tfrac{1}{2}+it)=0\}$
be the set of zeros on the critical line of the $\Gamma$-factor.
For values of $\theta_1$ and $\theta_2$ chosen according to Lemma~\ref{lem:min gamma},
$$\Omega_{\gamma_{\theta_1}} \cap \Omega_{\gamma_{\theta_2}} = \emptyset.$$
\end{corollary}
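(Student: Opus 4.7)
The proof is an immediate reformulation of Lemma~\ref{lem:min gamma}. The plan is to observe that the right-hand side of the bound in that lemma, namely
$$\frac{2}{3}(\cos\theta_2)^{\frac{1}{2}+\epsilon}(2\pi)^{-\epsilon}\left(\frac{\pi}{r\sinh(\pi r)}\right)^{1/2},$$
is strictly positive for every $r>5$ and every $\theta_2\in(0,\pi/2)$. Consequently, whenever $(\theta_1,\theta_2)$ is an admissible pair in the sense of Lemma~\ref{lem:min gamma}, the maximum of $|\gamma_{\theta_1}(s,f)|$ and $|\gamma_{\theta_2}(s,f)|$ cannot be zero at $s=\frac{1}{2}+it$.

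Concretely, I would argue by contradiction: suppose some $t$ lies in $\Omega_{\gamma_{\theta_1}}\cap\Omega_{\gamma_{\theta_2}}$, so that $\gamma_{\theta_1}(\tfrac{1}{2}+it,f)=0=\gamma_{\theta_2}(\tfrac{1}{2}+it,f)$. Applying Lemma~\ref{lem:min gamma} at $\sigma=\tfrac{1}{2}$ with the given $\theta_1,\theta_2$ (which by hypothesis satisfy the assumptions of the lemma), one obtains $\max\{|\gamma_{\theta_1}(\tfrac{1}{2}+it,f)|,|\gamma_{\theta_2}(\tfrac{1}{2}+it,f)|\}>0$, contradicting the assumed simultaneous vanishing. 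Hence $\Omega_{\gamma_{\theta_1}}\cap\Omega_{\gamma_{\theta_2}}=\emptyset$.

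I expect no substantive obstacle; this is truly a one-line corollary. The only interpretational subtlety worth flagging is that the hypothesis $\cos\theta_1\leq(4+|t^2-r^2|)^{-1/2}$ of Lemma~\ref{lem:min gamma} is $t$-dependent, so the phrase ``chosen according to Lemma~\ref{lem:min gamma}'' must be read as: for each $t$ under consideration, the pair $(\theta_1,\theta_2)$ is taken to satisfy those assumptions at that $t$. Under this reading the two $\Gamma$-factors cannot vanish simultaneously, which is precisely what is needed to ensure that $L(s,f)=\Lambda_\theta(s,f)/\gamma_\theta(s,f)$ can always be evaluated without a $0/0$ indeterminacy by switching between $\theta_1$ and $\theta_2$ as necessary.
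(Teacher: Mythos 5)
Your proof is correct and is precisely the argument the paper compresses into ``This follows immediately from Lemma~\ref{lem:min gamma}'': the lemma's lower bound is strictly positive, so the two $\Gamma$-factors cannot vanish simultaneously. Your observation that the choice of $(\theta_1,\theta_2)$ is $t$-dependent (via the hypothesis $\cos\theta_1\leq(4+\abs{t^2-r^2})^{-1/2}$) is an accurate reading of the intended meaning of ``chosen according to Lemma~\ref{lem:min gamma}.''
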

\begin{proof}
This follows immediately from Lemma~\ref{lem:min gamma}.
\end{proof}

\begin{remark}\rm
Fixing the value of $\theta$, say $\theta=\theta_1$, there is a risk
of hitting a zero of $\gamma_{\theta_1}$ (to within the internal
precision) when evaluating $Z_{\theta_1}$ for some specific value of $t$.
In all of our computations, we never observed this in practice,
i.e.\ we never had to deal with division by zero.
However, computing at finite absolute precision, we sometimes come close
to a zero of $\gamma_{\theta_1}$ and experience some loss of precision
in the division by $\abs{\gamma_{\theta_1}}$.
Since we also compute for a second value of $\theta$, $\theta=\theta_2$,
chosen according to Lemma~\ref{lem:min gamma}, we may always ensure the
accuracy of the computed values of $Z$.
\end{remark}

For each Maass form $L$-function under consideration, we rigorously compute all
zeros on the critical line up to some height.
The search for zeros is faciliated by the following lemma.
\begin{lemma}\label{lem:t}
(a) Let $Z\in C^1(\R)$ be real valued, and assume it has consecutive
simple zeros at $t_0$, $t_1$ and $t_2$, with $Z'(t_0)>0$.
Then $\exists t_a,t_b,t_c,t_d,t_e,t_f$ such that the following holds:
$$\begin{array}{c|ccc}
  t                 & Z'(t) & Z(t) & \text{quadrant of $Z'+iZ$} \\
  \hline
  t_a<t<t_0         &  >0   &  <0  &     4                      \\
  t_0<t<t_b         &  >0   &  >0  &     1                      \\
  t_b\leq t\leq t_c &       &  >0  &     1 \text{ or } 2        \\
  t_c<t<t_1         &  <0   &  >0  &     2                      \\
  t_1<t<t_d         &  <0   &  <0  &     3                      \\
  t_d\leq t\leq t_e &       &  <0  &     3 \text{ or } 4        \\
  t_e<t<t_2         &  >0   &  <0  &     4                      \\
  t_2<t<t_f         &  >0   &  >0  &     1                      \\
\end{array}$$

(b) Let $Z\in C^n(\R)$ be real valued, and assume it has a zero of order $n$ at
$t_0$, with
$\left(\tfrac{d^n}{dt^n}Z\right)(t_0)>0$.  Then $\exists t_a,t_b$ such that
the following holds:
$$\begin{array}{c|cccc}
  t & \tfrac{d^n}{dt^n}Z & \tfrac{d^{n-1}}{dt^{n-1}}Z &
  \tfrac{d^{n-2k}}{dt^{n-2k}}Z & \tfrac{d^{n-2k-1}}{dt^{n-2k-1}}Z \\
  \hline
  t_a<t<t_0  &  >0  &  <0  &  >0  &  <0 \\
  t=t_0      &  >0  &  =0  &  =0  &  =0 \\
  t_0<t<t_b  &  >0  &  >0  &  >0  &  >0 \\
\end{array}$$
for $k\in\Z$, but $0<2k<n$.
\end{lemma}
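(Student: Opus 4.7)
The claim is essentially a book-keeping exercise using the intermediate value theorem and sign-tracking for a real-valued $C^1$ (resp.\ $C^n$) function near an isolated zero. I would present it as two essentially separate arguments, both elementary, and I do not expect any genuine obstacle; the main subtlety is only to define the witnesses $t_a,\dots,t_f$ correctly.

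\emph{Part (a).} First I would fix the sign of $Z$ on the intervals between the three consecutive simple zeros. By simplicity and continuity, $Z$ has a fixed sign on each interval $(t_0,t_1)$, $(t_1,t_2)$. Since $Z'(t_0)>0$ and $Z(t_0)=0$, the hypothesis forces $Z<0$ immediately to the left of $t_0$ and $Z>0$ immediately to the right; hence $Z>0$ on all of $(t_0,t_1)$, then $Z<0$ on $(t_1,t_2)$, and $Z>0$ just after $t_2$. This determines $t_a<t_0<t_f$ with the stated signs of $Z$ by taking, say, $t_a$ to be the previous zero (or $-\infty$) and $t_f$ to be the next zero after $t_2$. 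Now at the simple zeros $t_1$ and $t_2$ the derivative is nonzero, with sign forced by the crossing direction: $Z'(t_1)<0$ and $Z'(t_2)>0$. To locate $t_b,t_c,t_d,t_e$ I would set
\[
t_b:=\sup\{t\in[t_0,t_1]:Z'>0\text{ on }[t_0,t]\},\quad
t_c:=\inf\{t\in[t_0,t_1]:Z'<0\text{ on }[t,t_1]\},
\]
and analogously $t_d,t_e\in[t_1,t_2]$ using the signs of $Z'$ at the endpoints. By continuity of $Z'$ we have $t_0<t_b\leq t_c<t_1$ and $t_1<t_d\leq t_e<t_2$; by construction the signs of $Z'$ on the outer sub-intervals are as prescribed, while on $[t_b,t_c]$ and $[t_d,t_e]$ no sign is asserted, consistent with the ``quadrant 1 or 2'' and ``quadrant 3 or 4'' entries. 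Combining the signs of $Z$ and $Z'$ on each region reads off the quadrant of $Z'+iZ$, yielding the table.

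\emph{Part (b).} The plan is an induction that walks from $\tfrac{d^n}{dt^n}Z$ downward to $Z$ itself. Since $\tfrac{d^n}{dt^n}Z$ is continuous and positive at $t_0$, there is an interval $(t_a,t_b)\ni t_0$ on which it stays positive; I restrict all subsequent statements to this interval. The induction step is: if $\tfrac{d^{j+1}}{dt^{j+1}}Z>0$ on $(t_a,t_b)$ and $\tfrac{d^j}{dt^j}Z(t_0)=0$, then $\tfrac{d^j}{dt^j}Z<0$ on $(t_a,t_0)$ and $>0$ on $(t_0,t_b)$; and if $\tfrac{d^{j+1}}{dt^{j+1}}Z$ has signs $(-,+)$ on $(t_a,t_0)$ and $(t_0,t_b)$ with $\tfrac{d^j}{dt^j}Z(t_0)=0$, then $\tfrac{d^j}{dt^j}Z>0$ on both halves (it decreases from a positive value down to $0$, then increases back). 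Starting from $\tfrac{d^n}{dt^n}Z>0$ and applying these two implications alternately, derivatives of the same parity as $n$ stay strictly positive on both sides of $t_0$, while derivatives of opposite parity are negative to the left and positive to the right. This is exactly the table in part (b), after shrinking $(t_a,t_b)$ finitely many times if necessary to preserve the signs through the induction.

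Neither part requires a delicate estimate, and the only place where care is needed is in choosing the witness points as sup/inf of the sets where a sign condition holds, rather than as arbitrary intermediate points; that guarantees the advertised signs hold on open intervals up to (but possibly not including) $t_b,t_c,t_d,t_e$.
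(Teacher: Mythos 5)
Your proposal is the obvious elementary argument, and it agrees with the paper's (one-line) proof, which simply invokes ``elementary analysis and the intermediate value theorem''; the parts you fill in are the ones the authors left implicit. Both halves of your argument are essentially sound.

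One small but genuine slip in part~(a): you propose taking $t_a$ to be the previous zero of $Z$ (and $t_f$ the next zero after $t_2$). That choice only guarantees the sign of $Z$ on $(t_a,t_0)$, not the sign of $Z'$. The row $t_a<t<t_0$ of the table also asserts $Z'(t)>0$, and since $Z'$ must change sign somewhere between the previous zero and $t_0$ (indeed $Z'$ is negative near that previous simple zero), taking $t_a$ all the way back to the previous zero would make the claimed sign of $Z'$ false. The fix is trivial: by continuity of $Z'$ and $Z'(t_0)>0$, choose $t_a$ in a small left-neighborhood of $t_0$ where $Z'$ stays positive, and analogously choose $t_f$ just to the right of $t_2$ using $Z'(t_2)>0$. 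Your sup/inf construction for $t_b,t_c,t_d,t_e$ is fine, and the verification that $t_b\le t_c$ (the two one-sided positivity/negativity intervals for $Z'$ cannot overlap) is correct though stated tersely. In part~(b), your first ``rule'' is phrased as if $\tfrac{d^{j+1}}{dt^{j+1}}Z>0$ on all of $(t_a,t_b)$, whereas for $j+1<n$ that derivative vanishes at $t_0$; what you actually use is positivity on the punctured interval $(t_a,t_0)\cup(t_0,t_b)$, which still forces strict monotonicity of $\tfrac{d^{j}}{dt^{j}}Z$ and hence the stated sign change. With these two phrasings tightened, the proof is complete, and no repeated shrinking of $(t_a,t_b)$ is actually needed once $\tfrac{d^n}{dt^n}Z>0$ is secured on it.
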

\begin{proof}
The lemma follows from elementary analysis and the intermediate value theorem.
\end{proof}

\begin{algorithm}\label{alg:t}
Let $Z\in C^\infty(\R)$ be real valued.
Let $(t_j)_{j\in\N}$ be a strictly increasing sequence of real numbers.
Refine the sequence $(t_j)$ until all zeros of $Z$ are isolated, i.e.\ there
is at most one zero per interval $(t_j,t_{j+1}]$.
\end{algorithm}

\begin{remark}\label{rem:t}\rm
If the quadrants of $Z'+iZ$ for consecutive $t_j$ are not ordered
as given in Lemma~\ref{lem:t}(a), there is either a zero of order greater
than $1$ which is to be investigated according to Lemma~\ref{lem:t}(b), or the
sequence is not yet fine enough.
We expect the sequence to be fine enough if for successive $t_j$ the
quadrants of $Z'+iZ$ do change by at most by $1$, and when they
change, they do so in agreement with Lemma~\ref{lem:t}.
\end{remark}

There is no proof that the expectation in Remark~\ref{rem:t} holds,
and one can construct sequences $(t_j)$ that contradict the expectation.
Nevertheless, with some reasonable choices in the construction of the
sequence $(t_j)$ and its refinements, the expectation turns out to be
reliable in practice.  Namely, for every Maass form $L$-function that we
considered, we never overlooked any zero, as proven after the fact using
Turing's method.

\section{Turing's method}\label{s:Turing}

Turing's method for verifying the Riemann hypothesis for arbitrary
$L$-functions is described in \cite{Booker2006}.
For $t$ not the ordinate of some zero or pole of $\Lambda$, let
$$S(t) := \frac{1}{\pi} \Im \int_\infty^{\frac{1}{2}} \frac{L'}{L}(\sigma+it,f)
\,d\sigma.$$
By convention, we make $S(t)$ upper semicontinuous, i.e.\ when $t$ is the
ordinate of a zero or pole, we define $S(t)=\lim_{\eps\to0^+} S(t+\eps)$.

We select a particular branch of $\log\gamma(s)$ by using the principal branch
of $\log\Gamma$.  With this choice, set
$$\Phi(t) := \frac{1}{\pi} \Im \log\gamma(\frac{1}{2}+it,f).$$
We further define
$$N(t) := \Phi(t)+S(t),$$
which relates to the number of zeros in the critical strip up to height
$t$.  For $t_1<t_2$ let $\Omega_L$ denote the multiset
of zeros with imaginary part in $(t_1,t_2]$,
and let $N(t_1,t_2)$ denote their number, counting multiplicity,
$$N(t_1,t_2) := \#\Omega_L(t_1,t_2).$$
Then, we have
$$N(t_1,t_2) = N(t_2)-N(t_1).$$

\begin{theorem}\cite[\S4]{Booker2006}\label{thm:Booker4.6}
For $\vartheta=\tfrac{7}{64}$ and $\sigma>\vartheta+1$, define
$$z_\vartheta(\sigma) := \left( \frac{
  \zeta(2\sigma+2\vartheta)\zeta(2\sigma-2\vartheta)}{
  \zeta(\sigma+\vartheta)\zeta(\sigma-\vartheta)} \right)^{1/2}$$
and
$$Z_\vartheta(\sigma)
:= \left( \zeta(\sigma+\vartheta)\zeta(\sigma-\vartheta) \right)^{1/2}.$$
Suppose $t_1$ and $t_2$ satisfy
$$(t_i\pm r)^2\geq\left(\tfrac{5}{2}+\epsilon\right)^2+X^2,\quad i=1,2$$
for some $X>5$, and set
$$C_\vartheta := \log Z_\vartheta\left(\frac{3}{2}\right)
+ \int_{3/2}^\infty \log\frac{Z_\vartheta(\sigma)}{z_\vartheta(\sigma)} d\sigma
- \int_{3/2}^{5/2} \log z_\vartheta(\sigma) d\sigma
+ (\log 4) \frac{z_\vartheta'(\tfrac{3}{2})}{z_\vartheta(\tfrac{3}{2})}.$$
Then
\begin{align*}
  \pi\int_{t_1}^{t_2} S(t)\,dt
  \leq \frac{1}{4}\log\abs{Q\left(\frac{3}{2}+it_2\right)}
  + \left(\log2 - \frac{1}{2}\right)\log\abs{Q\left(\frac{3}{2}+it_1\right)}
  + 2C_\vartheta + \frac{2}{\sqrt{2}(X-5)}.
\end{align*}
\end{theorem}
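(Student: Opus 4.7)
The plan is to follow Turing's method as adapted in \cite[\S4]{Booker2006}. The key identity comes from swapping the order of integration in the definition of $S(t)$: using $\int_{t_1}^{t_2}\frac{L'}{L}(\sigma+it,f)\,dt=\frac{1}{i}\bigl[\log L(\sigma+it_2,f)-\log L(\sigma+it_1,f)\bigr]$ and taking imaginary parts, one obtains the fundamental Turing identity
\[
\pi\int_{t_1}^{t_2} S(t)\,dt=\int_{1/2}^\infty\bigl[\log|L(\sigma+it_2,f)|-\log|L(\sigma+it_1,f)|\bigr]\,d\sigma.
\]

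Next, I would split this outer integral at $\sigma=3/2$. On the tail $[3/2,\infty)$, the Kim--Sarnak bound $|\alpha_p|\le p^\vartheta$ combined with the Euler product yields the uniform two-sided estimate $2\log z_\vartheta(\sigma)\le\log|L(\sigma+it,f)|\le 2\log Z_\vartheta(\sigma)$ for every real $t$, so that the tail contributes at most $2\int_{3/2}^\infty\log[Z_\vartheta(\sigma)/z_\vartheta(\sigma)]\,d\sigma$, accounting for one of the pieces of $C_\vartheta$.

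On the short range $[1/2,3/2]$ the two terms receive asymmetric treatment. For the $+\log|L(\sigma+it_2,f)|$ piece I would apply Phragm\'en--Lindel\"of on the strip $\Re s\in[-1/2,3/2]$: the Euler product controls $|L|$ on $\Re s=3/2$, while the functional equation $L(s,f)=\chi(s,f)\overline{L(1-\bar s,f)}$ propagates this to $\Re s=-1/2$ via $|L(-1/2+it,f)|\le|\chi(-1/2+it,f)|Z_\vartheta(3/2)^2$; the interpolating weight $(3/2-\sigma)/2$ integrates over $[1/2,3/2]$ to exactly $\tfrac14$, producing the $\tfrac14\log|Q(3/2+it_2,f)|$ coefficient after applying Stirling (valid under $(t_2\pm r)^2\ge(5/2+\epsilon)^2+X^2$) to convert $|\chi(-1/2+it_2,f)|$ into $|Q(3/2+it_2,f)|$. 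For $-\log|L(\sigma+it_1,f)|$ I need a \emph{lower} bound on $\int_{1/2}^{3/2}\log|L(\sigma+it_1,f)|\,d\sigma$, which I would extract from Littlewood's lemma applied to $L$ on a rectangle extending out to $\Re s=5/2$: the non-negative zero sum $2\pi\sum_\rho(\Re\rho-\tfrac12)$ can be dropped, the Euler product controls $\log|L|$ on $\Re s=5/2$, and the boundary arg-integrals furnish the $(\log 2-\tfrac12)\log|Q(3/2+it_1,f)|$ coefficient together with the remaining ingredients of $2C_\vartheta$: the $\log Z_\vartheta(3/2)$ boundary value, the $-\int_{3/2}^{5/2}\log z_\vartheta$ correction from the widened strip, and the $(\log 4)\,z_\vartheta'(3/2)/z_\vartheta(3/2)$ Stirling term.

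The residual $\tfrac{2}{\sqrt 2(X-5)}$ records the $\pi/\sinh(\pi|y|)$ defect in Lemma~\ref{lem:psi lemma}, carried through the digamma bounds and summed over distances to the trivial zeros at $s=-2n-\epsilon\pm ir$, which is exactly what the hypothesis $(t_i\pm r)^2\ge(5/2+\epsilon)^2+X^2$ with $X>5$ controls. The principal obstacle is the lower-bound step: making Littlewood's lemma effective with explicit constants requires careful bookkeeping of the boundary $\arg L$ contributions so they do not reintroduce a multiple of $\pi\int S(t)\,dt$ on the right, and it is precisely this delicate interplay between the lines $\Re s=3/2$ and $\Re s=5/2$ that ultimately forces the asymmetric coefficients $\tfrac14$ versus $\log 2-\tfrac12$.
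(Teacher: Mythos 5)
The paper offers no proof of this theorem: it is quoted verbatim from \cite[\S4]{Booker2006}, so what follows is a comparison of your plan with the argument given there.

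Your opening moves are sound. The Turing identity
$\pi\int_{t_1}^{t_2}S(t)\,dt=\int_{1/2}^\infty\bigl[\log|L(\sigma+it_2,f)|-\log|L(\sigma+it_1,f)|\bigr]\,d\sigma$
is correct, and splitting at $\sigma=3/2$ and bounding the tail through the two-sided Euler-product estimate $2\log z_\vartheta(\sigma)\le\log|L(\sigma+it,f)|\le 2\log Z_\vartheta(\sigma)$ (which follows from Kim--Sarnak) is exactly the first ingredient. Your treatment of the positive piece is also essentially right: Lemma~\ref{lem:Booker4.1} gives $\log|L(\sigma+it_2,f)|\le\tfrac12\log|\chi(\sigma+it_2,f)Q(\sigma+it_2,f)|+\sup\log|L(\tfrac32+it,f)|$, and after Stirling one integrates $\tfrac12(\tfrac32-\sigma)\log|Q|$ over $[\tfrac12,\tfrac32]$ to produce the coefficient $\tfrac14$ (together with the $\log Z_\vartheta(\tfrac32)$ piece of $C_\vartheta$).

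The gap is precisely where you flagged it: Littlewood's lemma applied to $L$ itself over a rectangle with left edge on $\Re s=\tfrac12$ produces the boundary integral $\int\arg L(\tfrac12+it)\,dt$, which \emph{is} $\pi\int S(t)\,dt$ --- there is no way to ``bookkeep'' this away, and the estimate becomes circular. The device that breaks the circularity in Turing's method (and in \cite{Booker2006}) is an auxiliary function: one works with
$F(s):=L(s,f)\,\overline{L(2\sigma_0-\bar s,f)}$ with $\sigma_0=\tfrac52$, which is real and nonnegative on the line $\Re s=\sigma_0$ (there $F(\sigma_0+iu)=|L(\sigma_0+iu,f)|^2$), and which has all its zeros in conjugate pairs about that line. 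One then applies a Jensen/Borel--Carath\'eodory estimate centred at $\sigma_0+it_1$ to bound $\int_{1/2}^{3/2}\log|F(\sigma+it_1)|\,d\sigma$ from below in terms of $\log F(\tfrac52+it_1)\ge 4\log z_\vartheta(\tfrac52)$ and a growth bound for $\log|F|$ on a disc. Because $F$ is positive at the centre, no argument of $L$ ever enters; the zero contribution appears with a favourable sign and can be discarded. Unfolding $\log|F|=\log|L|+\log|L(2\sigma_0-\cdot)|$ and using the functional equation to fold the resulting $\sigma$-ranges back into $[\tfrac12,\tfrac52]$ is what manufactures the coefficient $\log2-\tfrac12$ on $\log|Q(\tfrac32+it_1)|$, as well as the remaining pieces $-\int_{3/2}^{5/2}\log z_\vartheta$ and $(\log4)\,z_\vartheta'(\tfrac32)/z_\vartheta(\tfrac32)$ of $C_\vartheta$; the $\tfrac{2}{\sqrt2(X-5)}$ is the accumulated Stirling/Binet error, controlled by the hypothesis $(t_i\pm r)^2\ge(\tfrac52+\epsilon)^2+X^2$ with $X>5$ (the role your Lemma~\ref{lem:psi lemma} remark correctly identifies). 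Without introducing this auxiliary function, or some equivalent device that removes $\arg L$ from the boundary, your Littlewood-lemma step cannot be completed.
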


\begin{corollary}\label{Turing method}
For $0\leq t_1<t_2$, assume $\tilde{\Omega}_L(t_1,t_2)$ is a given
multiset of zeros with imaginary part in $(t_1,t_2]$,
i.e.\ $\tilde{\Omega}_L(t_1,t_2) \subseteq \Omega_L(t_1,t_2)$.
Let
\begin{align*}
  & N_{\tilde{\Omega}_L}(t_1,t_2)
  := \#\tilde{\Omega}_L(t_1,t_2),
  \qquad \text{counting multiplicity},
  \\
  & N_{\tilde{\Omega}_L}(t) := N_{\tilde{\Omega}_L}(t,0)+\Phi(0)+S(0),
  \\
  \text{and} \qquad
  & S_{\tilde{\Omega}_L}(t) := N_{\tilde{\Omega}_L}(t)-\Phi(t).
\end{align*}
If
$$\pi\int_{t_1}^{t_2} \left( S_{\tilde{\Omega}_L}(t) + 1 \right)\,dt$$
exceeds the right-hand side of the bound in Theorem~\ref{thm:Booker4.6},
then the set $\tilde{\Omega}_L(0,t_1)$ contains all zeros with imaginary part
in $(0,t_1]$. $\tilde{\Omega}_L(0,t_1) = \Omega_L(0,t_1)$.
\end{corollary}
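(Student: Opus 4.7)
The plan is to compare the proposed zero-counting function $N_{\tilde{\Omega}_L}$ with the true one $N$, show that their difference is a non-negative, non-decreasing, integer-valued function, and then contradict the hypothesis if any zero is missing below $t_1$.

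More concretely, first I would observe that since $\tilde{\Omega}_L(t_1,t_2) \subseteq \Omega_L(t_1,t_2)$ for every pair, the quantity
\[
M(t) \;:=\; N(t) - N_{\tilde{\Omega}_L}(t) \;=\; \#\Omega_L(0,t) - \#\tilde{\Omega}_L(0,t)
\]
is a non-negative integer that counts the number of ``missing'' zeros up to height $t$. Since subtracting the two definitions gives $S(t) - S_{\tilde{\Omega}_L}(t) = M(t)$, the reduction $M(t_2) - M(t_1) = \#\Omega_L(t_1,t_2) - \#\tilde{\Omega}_L(t_1,t_2) \geq 0$ shows that $M$ is non-decreasing. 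In particular $M(t) \geq M(t_1)$ for all $t \in [t_1,t_2]$.

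Next I would invoke Theorem~\ref{thm:Booker4.6} to bound $\pi\int_{t_1}^{t_2} S(t)\,dt$ from above by the right-hand side $R$ of that theorem. Writing $S(t) = S_{\tilde{\Omega}_L}(t) + M(t)$ and using the monotonicity just established yields
\[
\pi \int_{t_1}^{t_2} S_{\tilde{\Omega}_L}(t)\,dt \;+\; \pi\,M(t_1)\,(t_2-t_1)
\;\leq\; \pi \int_{t_1}^{t_2} S_{\tilde{\Omega}_L}(t)\,dt \;+\; \pi\int_{t_1}^{t_2} M(t)\,dt
\;\leq\; R.
\]
Combined with the assumption $\pi\int_{t_1}^{t_2}(S_{\tilde{\Omega}_L}(t)+1)\,dt > R$, this forces the strict inequality $\pi\,M(t_1)\,(t_2-t_1) < \pi(t_2-t_1)$, i.e.\ $M(t_1) < 1$. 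Since $M(t_1) \in \Z_{\geq 0}$, we conclude $M(t_1) = 0$, which together with the hypothesis $\tilde{\Omega}_L(0,t_1) \subseteq \Omega_L(0,t_1)$ (as multisets) gives the desired equality.

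The only mildly delicate point is the bookkeeping around the additive constant $\Phi(0)+S(0) = N(0)$ in the definition of $N_{\tilde{\Omega}_L}(t)$, which is precisely what makes $M(t) = N(t) - N_{\tilde{\Omega}_L}(t)$ equal to the count of missed zeros rather than something shifted; but this is just arithmetic. The analytic content is entirely carried by Theorem~\ref{thm:Booker4.6}, so no further obstacle arises.
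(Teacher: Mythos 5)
Your proof is correct and takes essentially the same approach as the paper: the paper argues by contradiction (if a zero were missing below $t_1$ then $S_{\tilde{\Omega}_L}(t)+1\leq S(t)$ on $(t_1,t_2]$, which upon integrating contradicts Theorem~\ref{thm:Booker4.6}), and your contrapositive argument via the non-decreasing, non-negative, integer-valued function $M(t)=N(t)-N_{\tilde{\Omega}_L}(t)$ is the same inequality chain recast directly to deduce $M(t_1)<1$.
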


\begin{proof}
If $\tilde{\Omega}_L(0,t_1)$ were a proper subset of $\Omega_L(0,t_1)$,
then we would have $N_{\tilde{\Omega}_L}(t_1)<N(t_1)$, whence
$S_{\tilde{\Omega}_L}(t)+1\leq S(t)\ \forall t\in(t_1,t_2]$.
But the integral of the latter is bounded by Theorem~\ref{thm:Booker4.6}.
\end{proof}

\section{Numerical results}\label{s:results}

We consider consecutive Maass cusp forms on $\SL(2,\Z)=\Gamma(1)$.
Booker, Str\"ombergsson, and Venkatesh \cite{BookerStrombergssonVenkatesh2006}
have rigorously computed the first $10$ Maass cusp forms on $\SL(2,\Z)$ to
high precision.  Bian \cite{Bian2017} has extended these computations
to a larger number of Maass cusp forms.
The readily available list of rigorously computed Maass cusp forms is
consecutive for the first $2191$ Maass cusp forms, which covers all
Maass cusp forms whose Laplacian eigenvalue $\lambda=r^2+\frac{1}{4}$
falls into the range $0\leq r\leq178$.

Previous numerical computations of some non-trivial zeros for a few even
Maass form $L$-functions were made by Str\"ombergsson \cite{Strombergsson1999}.
We extend his results by rigorously computing, for each of the first
$2191$ consecutive Maass form $L$-functions on $\SL(2,\Z)$, many values of $Z$,
including all non-trivial zeros up to $T=30000$, at least.

\begin{remark}\rm
At the time of Str\"ombergsson's work, even the numerical data
pertaining to the Maass cusp forms for $\SL(2,\Z)$ was not rigorously proven to be
accurate, so he had no reason to carry out his computations of the zeros
with more than heuristic estimates for the error.
Making use of the rigorous data sets described above, we
have rigorously verified the correctness of Str\"ombergsson's results.
In particular his lists of zeros are consecutive
and accurate.  Moreover, we confirm his observation of a zero-free region on
the critical line for $t$ near $r$, when $r$ is small.

We note that some theoretical results, such as Cho's theorem \cite{Cho2013}
on simple zero of Maass form $L$-functions, assumed the correctness of
Str\"ombergsson's numerical results. With our verification, Cho's
theorem becomes unconditional.
\end{remark}

\begin{table}
\caption{\label{tab:zeros}Consecutive lists of the first few non-trivial zeros
for the first five Maass form $L$-functions on $\SL(2,\Z)$.  Each column is
for one Maass form $L$-function and is specified by the spectral parameter $r$
and the parity $\epsilon$.  The displayed numbers are
the ordinates of the first few consecutive
zeros for $t>0$, all of which are on the critical line.  Each number is
accurate to within $\pm1$ in the last digit.}
\begin{scriptsize}
$$\begin{array}{c}
r=\ 9.533695261 \\
\epsilon= 1 \\
\hline
17.0249420759926 \\
19.3441154991815 \\
22.8261931283343 \\
25.7999235601013 \\
27.6164361749163 \\
29.1018834648622 \\
31.8310613699717 \\
34.3471038793177 \\
35.6095026712633 \\
37.1600794665599 \\
38.9798718247120 \\
40.8649210955904 \\
42.9624682023100 \\
44.7165876388095 \\
45.7081766302651 \\
46.9228865619812 \\
48.8845923479460 \\
50.8585578341632 \\
52.2084561079916 \\
53.8667859217878 \\
54.8124463691756 \\
56.1642766726080 \\
57.7477158040669 \\
59.1886000560169 \\
61.2112906749061 \\
62.4099725413140 \\
63.3997167996275 \\
64.4782740136229 \\
65.8411920277228 \\
67.6680975697523 \\
68.7657311068868 \\
70.5658999093301 \\
71.5151450636631 \\
72.7793106037368 \\
74.0609762360244 \\
74.6049003579295 \\
76.6307909351257 \\
77.8404437657817 \\
79.5177824537089 \\
80.5991976660814 \\
81.2939779667956 \\
82.6785102707012 \\
83.5676015306852 \\
85.3322176044279 \\
86.2362409919154 \\
87.7201460156160 \\
89.2073136143526 \\
90.1509029432393 \\
91.1018169318231 \\
91.9180366781390 \\
93.2577400210036 \\
94.5361681047575 \\
96.0183894125484 \\
97.3452841035982 \\
98.1159568311207 \\
99.4221065922338 \\
100.313745124143 \\
101.182867524182 \\
102.639857458614 \\
103.546439751200 \\
104.953553816157 \\
106.296129428261 \\
107.434291294864 \\
108.433276503701 \\
109.252032855555 \\
109.980276788617 \\
111.243323281265 \\
112.822069704440 \\
113.642166722904 \\
114.945311219697 \\
\end{array}
\begin{array}{c}
r=12.173008325 \\
\epsilon= 1 \\
\hline
5.10553130864728 \\
17.7442287880043 \\
22.0828833772350 \\
23.6900118314732 \\
27.1426126160360 \\
28.8988378613334 \\
31.2199778278305 \\
33.3027699993553 \\
34.9413315016281 \\
36.8220610290123 \\
39.4036457550467 \\
40.3954506308287 \\
41.7518913966523 \\
44.3141127846671 \\
45.6041247768810 \\
46.7731555096729 \\
49.0623573859669 \\
50.4428786981306 \\
51.4839637477090 \\
53.1358251106050 \\
54.8229659148021 \\
56.5390086739774 \\
57.7030588658215 \\
58.8511658803142 \\
60.3363236494317 \\
62.3427605114013 \\
63.1451180474117 \\
64.2165899608009 \\
66.2554993505022 \\
67.7507046815066 \\
68.5590956746215 \\
69.4453333319929 \\
71.3077160409394 \\
72.9519709018265 \\
73.6890121026987 \\
75.2952196918033 \\
76.2065113468558 \\
77.7272898984985 \\
78.9591932615256 \\
80.2505964617052 \\
81.2013544790962 \\
82.3915692402408 \\
84.2017334394370 \\
85.5004972002107 \\
86.0646875741528 \\
86.9821021413084 \\
88.6357429075844 \\
90.2062740406416 \\
91.1589654673596 \\
91.8841682334637 \\
93.3261418331635 \\
94.8381113365867 \\
95.6998921488729 \\
96.8603035570679 \\
97.8041648452534 \\
99.0948448347175 \\
100.185735184243 \\
101.737682026863 \\
102.777547784678 \\
103.612872790404 \\
104.586408006086 \\
105.618614451690 \\
107.487452706483 \\
108.204341251653 \\
109.099573594018 \\
110.023692551348 \\
111.610332958436 \\
112.968244330445 \\
113.535660411511 \\
114.803248583420 \\
\end{array}
\begin{array}{c}
r=13.779751352 \\
\epsilon= 0 \\
\hline
2.89772467827094 \\
5.59124531531950 \\
21.0903775087339 \\
23.1575104845853 \\
25.4393003898372 \\
29.1892067135368 \\
31.0617394845440 \\
32.4527182375570 \\
34.0272796838472 \\
36.9312371974937 \\
38.9870982151186 \\
40.4655490222834 \\
41.6851103312465 \\
43.0510814899645 \\
45.2203620069413 \\
47.6607243153047 \\
48.8179663907847 \\
49.7984652829980 \\
51.3751449154626 \\
52.5598876963433 \\
54.6535140546208 \\
56.6899697503172 \\
57.6166211934090 \\
59.0433361195422 \\
60.2512945420134 \\
60.9302544966805 \\
63.0554036340306 \\
65.0280616017899 \\
66.0531445397070 \\
67.5747312567319 \\
68.2882811318913 \\
69.4223658112824 \\
70.8802548329208 \\
72.2899933699866 \\
74.1574305286740 \\
75.5105873385132 \\
76.5116524370659 \\
77.2965875092772 \\
78.5268435032172 \\
79.6462440380120 \\
81.0934750546557 \\
83.0999731356220 \\
83.9810907179412 \\
85.1204644360696 \\
86.2039221458232 \\
87.2818523030882 \\
88.1416316506303 \\
89.2944208770600 \\
91.2299257008873 \\
92.3569273378821 \\
93.8884516137452 \\
94.4624382674195 \\
95.5004244761379 \\
96.5910720591420 \\
97.3209962526615 \\
99.1562706753946 \\
100.163498257903 \\
101.758162340822 \\
103.080823014291 \\
103.400716986936 \\
104.807000384562 \\
105.510967974394 \\
106.653239218716 \\
107.677935083655 \\
109.354544636483 \\
110.858892769331 \\
111.635310785971 \\
112.452947413326 \\
113.536311309751 \\
114.599448047724 \\
\end{array}
\begin{array}{c}
r=14.358509518 \\
\epsilon= 1 \\
\hline
3.76470190452593 \\
8.44187034414965 \\
19.4483544500909 \\
23.0939623051538 \\
26.1518661201714 \\
27.8260578322407 \\
30.9903075480748 \\
32.0681458350820 \\
35.0463081449147 \\
36.3730961243226 \\
38.1758857326494 \\
40.0803022802362 \\
42.1472089149297 \\
43.4072270007604 \\
45.2854547559751 \\
46.7361221975669 \\
48.4727303964927 \\
49.9790279360135 \\
51.2742978087781 \\
53.5712937366133 \\
54.6096034975762 \\
55.2759761303777 \\
57.8055604312075 \\
58.9190985459828 \\
60.0205118250084 \\
61.9578406743725 \\
62.9609412152269 \\
64.4328958857527 \\
65.8735219313308 \\
66.9654292253422 \\
68.3575794456240 \\
70.0897529037803 \\
71.2206506835744 \\
72.2540257118982 \\
73.6822062524674 \\
74.7169239115014 \\
76.7494469809889 \\
77.4226197121217 \\
78.3595482351092 \\
80.2707645343853 \\
81.6367380140678 \\
82.2630761619909 \\
83.3558394239428 \\
85.1612109448322 \\
86.1020003845199 \\
87.2123216652190 \\
88.8839330774639 \\
89.7818929416073 \\
90.5922255118720 \\
91.9914808989279 \\
93.8202261494019 \\
94.0923380413559 \\
95.3961549935688 \\
96.8178940746616 \\
97.9968741052357 \\
99.2448562396670 \\
100.278579859458 \\
100.957059993630 \\
102.300159307988 \\
103.804155362774 \\
104.874662606167 \\
105.730976498822 \\
106.722239502391 \\
108.413917430577 \\
109.104485482787 \\
110.011383725637 \\
111.404230185118 \\
112.677396867124 \\
113.308306899293 \\
114.342832613189 \\
\end{array}
\begin{array}{c}
r=16.138073172 \\
\epsilon= 1 \\
\hline
4.07043016260804 \\
6.01471804932679 \\
11.9094970989896 \\
22.3497093300588 \\
23.6756749096999 \\
27.7899319219294 \\
30.0381347329908 \\
32.0229736589354 \\
33.8112506234903 \\
35.5014869710102 \\
38.7878732480148 \\
39.9476421272383 \\
41.0312681795598 \\
42.7546214934681 \\
45.1517654717050 \\
46.4771782207275 \\
48.3625255758408 \\
50.1639512295890 \\
51.0355332480426 \\
51.9925909205271 \\
54.2915625621262 \\
56.2796983417801 \\
57.7013302737383 \\
58.3383624264988 \\
59.6802583964633 \\
61.5356150830096 \\
62.7868331792340 \\
63.9997915377463 \\
66.1489856985468 \\
67.3268819458017 \\
68.3016826900454 \\
69.2840066042359 \\
70.2797560418629 \\
72.4309724746729 \\
74.1819479588922 \\
74.7011406259552 \\
76.1335791083958 \\
77.4636368333900 \\
78.4634979258068 \\
79.6781395629882 \\
80.9175966370216 \\
82.2205366713458 \\
84.1953372371218 \\
85.3999580853391 \\
85.8048466006321 \\
86.8146478765005 \\
87.8727881608657 \\
89.8827782231190 \\
90.7761256889692 \\
92.0942449809388 \\
93.3990695897775 \\
94.2940427772870 \\
95.7412459465434 \\
96.5775317444622 \\
97.4404107576090 \\
98.4151717785236 \\
100.248850160768 \\
101.770705877070 \\
102.681563321248 \\
103.496925840063 \\
104.349526888882 \\
105.492038316423 \\
106.554052363134 \\
108.328860724077 \\
108.889566928655 \\
109.993076784022 \\
111.644210582933 \\
112.766801484652 \\
113.685480585879 \\
114.136514740415 \\
\end{array}$$
\end{scriptsize}
\end{table}

Our lists of zeros contain more than $60000$ consecutive non-trivial zeros
per Maass form $L$-function.  All these zeros are simple.
The first several zeros of the first five Maass form $L$-functions
are listed in Table~\ref{tab:zeros}.

\begin{theorem}\label{thm:finite GRH}
For $f$ a Maass cusp form on $\SL(2,\Z)$ with spectral parameter
$0\leq r\leq178$, all non-trivial zeros with $0\leq t\leq30000$ of
the corresponding Maass form $L$-function are simple and on the critical line.
\end{theorem}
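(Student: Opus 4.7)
The plan is to execute the algorithms developed in \S\ref{s:alg}--\S\ref{s:Turing} for each of the $2191$ Maass cusp forms on $\SL(2,\Z)$ with spectral parameter $0\leq r\leq178$ supplied by \cite{Bian2017}, and to combine three rigorous ingredients: (i) a rigorous evaluation of $Z(t,f)$ on a dense grid covering $0\leq t\leq T$ for some $T>30000$; (ii) a zero-detection procedure that simultaneously certifies simplicity and locates each zero; (iii) Turing's method to rule out missed zeros off the critical line or beyond the resolution of the grid.

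First I would fix, for each form, two parameters $\theta_1<\theta_2$ obeying the hypotheses of Lemma~\ref{lem:min gamma}, and parameters $A,B,C,C',b$ large enough that the cumulative error bounds coming from Lemmas~\ref{lem:sum gamma}, \ref{lem:sum Lambda}, \ref{lem:^Lambda}, \ref{lem:^gamma}, \ref{lem:sum Lambda truncated}, \ref{lem:sum gamma truncated}, \ref{lem:int hatLambda}, and~\ref{lem:int hatgamma} are comfortably smaller than the smallest feature we need to resolve on $[0,T]$. Using the FFT implementation of (\ref{Lambda}a) and (\ref{gamma a}) from \S\ref{s:alg} and MPFI interval arithmetic to propagate round-off errors, this produces certified enclosures of $\Lambda_{\theta_j}(\tfrac12+it,f)$ and $\gamma_{\theta_j}(\tfrac12+it,f)$ at the grid points $t=m/A$. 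Dividing and taking the argument yields enclosures of $Z(t,f)$; whenever one $\theta_j$ gives a large enclosure due to proximity to a zero of $\gamma_{\theta_j}$, Corollary~\ref{cor:gamma zeros} guarantees that the other value of $\theta$ provides a clean evaluation, so division by (near-)zero is never forced.

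Next I apply Algorithm~\ref{alg:t} driven by Lemma~\ref{lem:t}: the enclosures of $Z$ and $Z'$ determine the quadrant of $Z'+iZ$ at each grid point, and the sequence is refined (via the Whittaker--Shannon interpolation of \S\ref{s:zeros}, with truncation error controlled by Lemmas~\ref{lem:sum Lambda truncated}--\ref{lem:int hatgamma}) until between consecutive grid points the quadrant changes by at most one in the pattern dictated by Lemma~\ref{lem:t}(a). Each such transition certifies a single simple zero in the corresponding subinterval by the intermediate value theorem; no transition consistent with Lemma~\ref{lem:t}(b) occurs, so no zero of higher order is ever encountered. Finally I invoke Turing's method: choosing $t_1=30000$ and a slightly larger $t_2$ with $(t_i\pm r)^2\geq(\tfrac52+\epsilon)^2+X^2$ for some $X>5$, I compute rigorously the left-hand side $\pi\int_{t_1}^{t_2}(S_{\tilde\Omega_L}(t)+1)\,dt$ using the enclosed zeros and the principal-branch formula for $\Phi$, and verify it exceeds the right-hand side of Theorem~\ref{thm:Booker4.6}. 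By Corollary~\ref{Turing method} this proves $\tilde\Omega_L(0,t_1)=\Omega_L(0,t_1)$, so every zero in $(0,30000]$ has been accounted for and lies on the critical line.

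The main obstacle is engineering, not mathematics: propagating interval arithmetic through thousands of FFTs, Bessel evaluations, and hypergeometric factors for $2191$ forms without the enclosures ballooning, while choosing $A,B,C,C',b,T,t_2$ uniformly enough that the certified bounds in the lemmas above all beat the signal size of the smallest gap between consecutive zeros and the minimum of $|\gamma_{\theta_1}|+|\gamma_{\theta_2}|$ from Lemma~\ref{lem:min gamma}. Once these parameters are tuned so that the refinement loop in Algorithm~\ref{alg:t} terminates and the Turing integral verifiably exceeds the theorem's bound at $t_1=30000$, the theorem follows modulo the hardware/software caveats noted in the introduction.
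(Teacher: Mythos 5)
Your proposal is correct and follows essentially the same approach as the paper: the paper's own proof is a one-sentence invocation of Corollary~\ref{Turing method} applied to the rigorously computed lists of zeros, which is exactly the pipeline you spell out (FFT evaluation of $\Lambda_\theta$ and $\gamma_\theta$ with interval arithmetic and the error bounds of \S\ref{s:bounds}--\S\ref{s:zeros}, zero detection and isolation via Lemma~\ref{lem:t} and Algorithm~\ref{alg:t}, and completeness and simplicity certified by Turing's method). The only nuance worth flagging is that the sign-change pattern of Lemma~\ref{lem:t}(a) alone does not by itself exclude undetected even-order or off-line zeros --- it is the matching of the detected zero count against the Turing bound that closes that gap --- but your final paragraph invoking Corollary~\ref{Turing method} supplies exactly this, so the argument is sound.
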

\begin{proof}
For each Maass form $L$-function we prove, using Corollary~\ref{Turing method},
that the corresponding list of rigorously computed zeros is consecutive for
$0\leq t\leq30000$, and that all the zeros are indeed simple and
on the critical line.
\end{proof}

According to a conjecture of Montgomery \cite{Montgomery1973}, the
distribution of non-trivial zeros should follow random matrix theory (RMT)
predictions.  In case of Maass form
$L$-functions, the distribution of non-trivial zeros is expected to
conform to that of eigenvalues of large random matrices from the
Gaussian unitary ensemble (GUE) \cite{KeatingSnaith2000}.
This raises the question of how GUE statistics relate to the
zero-free region around $t=r$ observed by Str\"ombergsson
\cite{Strombergsson1999}---are the GUE statistics asymptotically correct
in the large $t$ aspect only?

We investigate this question by distinguishing between zeros with small
and large absolute ordinate,
respectively.  For a given Maass form $L$-function there are only a finite
number of zeros with small ordinate,
and the resulting statistics would be poor.
Knowing the zeros for many Maass form $L$-functions, we can evaluate on a
common scale the distribution of zeros for each $L$-function and
collate the statistics of many of them together.

Let $f$ be a Maass cusp form with spectral parameter $r$ and parity
$\epsilon$.  Consider the zeros of the associated Maass form $L$-function.
We unfold the zeros,
$$x_i := \Phi(t_i),$$
in order to obtain rescaled zeros $x_i$ with a unit mean density.  Then
$s_i := x_{i+i}-x_i$
defines the sequence of nearest-neighbor spacings, which has mean
value $1$ as $i\to\infty$.
Now, the distribution of nearest-neighbor spacings is given by
$$\int_0^s P_f(s')\,ds' := \lim_{j\to\infty} \frac{\#\{i\leq j:s_i\leq s\}}{
  \#\{i\leq j\}},$$
where the index $f$ denotes the corresponding Maass cusp form.
Distributions of rescaled nearest-neighbor spacings
are expected to be independent of the specific parameter values of
corresponding Maass cusp forms $f$ and can be collated by writing
$$P(s):=\frac{1}{\#\{f\}} \sum_{f} P_f(s).$$

\begin{figure}
\includegraphics{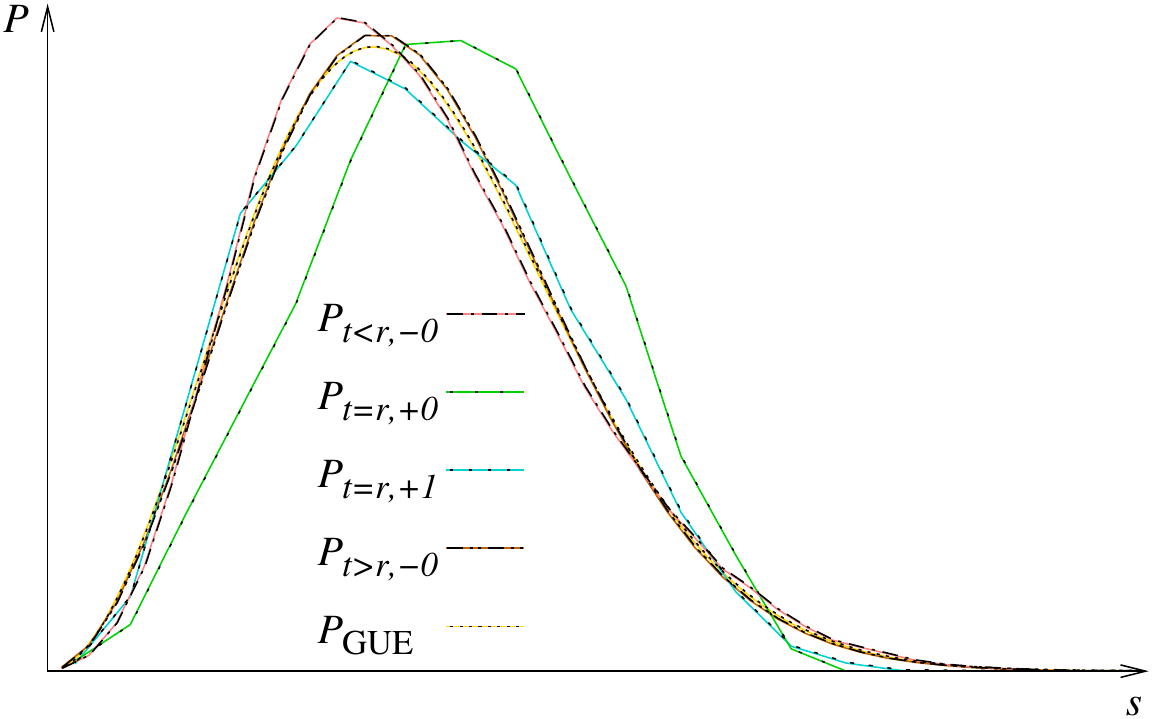}
\caption{\label{fig:P}Rescaled nearest-neighbor spacing distributions
  $P_{t<r,-0}$, $P_{t=r,+0}$, $P_{t=r,+1}$, and $P_{t>r,-0}$, respectively,
  in comparison with the Wigner surmise $P_{\text{GUE}}$.
  Only the distribution of zeros that are in absolute size closest to the
  value of the spectral parameter might show a stronger level repulsion
  than the Wigner surmise.  In all other cases, the distribution of zeros
  closely resembles GUE statistics.}
\end{figure}

To distinguish between zeros with small and large absolute ordinate,
we define the respective nearest-neighbor spacings distributions,
\begin{align*}
  & \int_0^s P_{f,t<r,-n}(s')\,ds'
  := \lim_{j\to\infty} \frac{\#\{i\leq j:s_i\leq s,
    \ 0<t_{i-1},\ t_{i+n-1}<r\}}{\#\{i\leq j:0<t_{i-1},\ t_{i+n-1}<r\}},
  \\
  & \int_0^s P_{f,t=r,+n}(s')\,ds'
  := \lim_{j\to\infty} \frac{\#\{i\leq j:s_i\leq s,\ t_{i-1-n}<r<t_{i+n}\}}{
    \#\{i\leq j:t_{i-1-n}<r<t_{i+n}\}},
  \\
  & \int_0^s P_{f,t>r,-n}(s')\,ds'
  := \lim_{j\to\infty} \frac{\#\{i\leq j:s_i\leq s,\ t_{i-n}>r\}}{
    \#\{i\leq j:t_{i-n}>r\}},
\end{align*}
where $n$ is a non-negative integer, as well as their collated versions
\begin{align*}
  & P_{t<r,-n}(s) := \frac{1}{\#\{f\}} \sum_{f} P_{f,t<r,-n}(s),
  \\
  & P_{t=r,+n}(s) := \frac{1}{\#\{f\}} \sum_{f} P_{f,t=r,+n}(s),
  \\
  & P_{t>r,-n}(s) := \frac{1}{\#\{f\}} \sum_{f} P_{f,t>r,-n}(s).
\end{align*}
For the first $2191$ Maass form $L$-functions on $\SL(2,\Z)$, the resulting
distributions are displayed in Figure~\ref{fig:P}, in comparison with
the Wigner surmise
$$P_{\text{GUE}}(s) := \frac{32}{\pi^2} s^2 e^{-\frac{4s^2}{\pi}}.$$
As is visible, the distribution of zeros resembles GUE statistics
for both small and large absolute ordinate, and
there appears to be no distinction between the statistics of the two
cases.
Only the distribution of zeros that are in absolute size closest to the
value of the spectral parameter might show a stronger level repulsion
than the Wigner surmize.

However, it is unclear whether this seemingly stronger level
repulsion is just an artefact of the limited number of $2191$ spacings that
contribute to the histogram of $P_{t=r,+0}$. If we take three times as many
spacings into account, as is the case with $P_{t=r,+1}$, we again find a
close resemblance to the Poisson distribution.
We speculate that the GUE statistics hold for all $t$, not only
in the large $t$ aspect.

\begin{figure}
\includegraphics{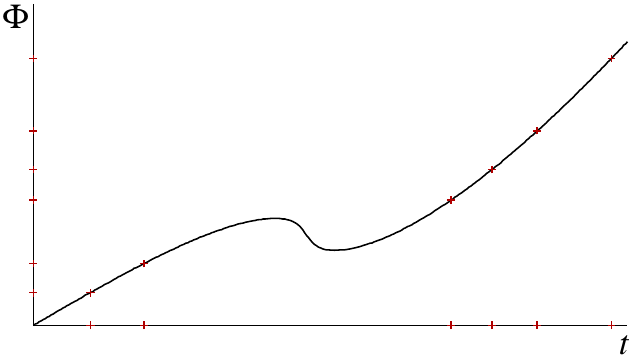}
\caption{\label{fig:Phi}Average number $\Phi$ of non-trivial zeros for the
  first even Maass form $L$-function on $\SL(2,Z)$.  For comparison, the
  locations of zeros and unfolded zeros are also included
  as tics.  Clearly visible is the negative density region
  ($\Phi'<0$) for $t$ around $r$.  Zeros are pulled away from this region
  resulting in a zero-free region in the $t$ aspect,
  but not in the unfolded zeros.}
\end{figure}

Since the GUE statistics are based on rescaled zeros, $x_i=\Phi(t_i)$,
they do not contradict a zero-free region on the critical
line.  The density of zeros is described by $\Phi'$, and according to the
$\Gamma$-factor, the density of zeros is smaller for $t$ in a neighborhood
of $r$.  In particular, for small values of the spectral parameter $r$,
the density $\Phi'$ becomes negative for $t$ near $r$; see
Figure~\ref{fig:Phi}.  There are finitely many Maass form $L$-functions
on $\SL(2,\Z)$ that have such a region where $\Phi'$ is negative.
By inspection, we find that no zero falls
into a negative density region.  Moreover, the zeros seem to be repelled away
from the negative density regions resulting in the zero-free region
around $r$.

\begin{figure}
\includegraphics{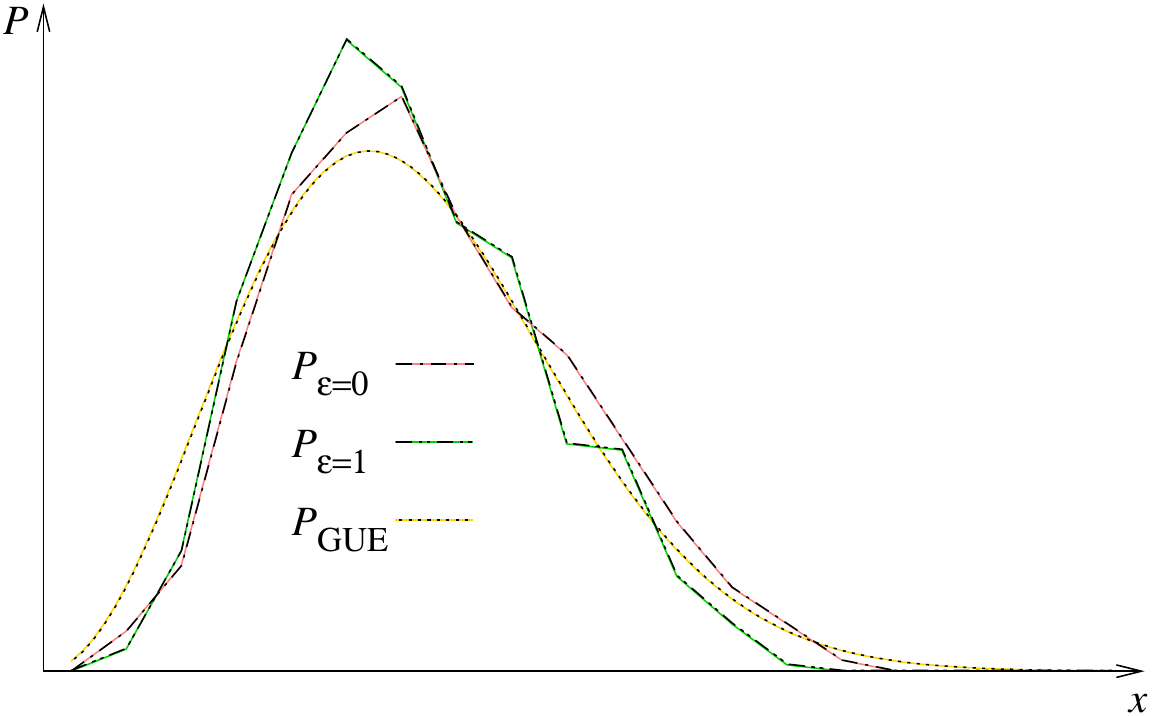}
\caption{\label{fig:P1}Distributions of the rescaled first zero in dependence
  of the parity of the Maass form $L$-function.
  Amongst the $2191$ Maass cusp forms under consideration,
  $1018$ of them are even with respect to reflection in the imaginary axis,
  $\epsilon=0$, and $1173$ of them are odd, $\epsilon=1$.
  In comparison with the Wigner surmise $P_{\text{GUE}}$, close to the origin
  of the plots, the first zero shows a stronger level repulsion
  irrespective of the parity.}
\end{figure}

Finally, we investigate the repulsion from zero of the rescaled first zero
$x_1$ in dependence of the parity $\epsilon$ of the Maass form $L$-function.
For this we consider the distributions of the rescaled first zero,
\begin{align*}
  \int_0^x P_{\epsilon=e}(x')\,dx'
  := \lim_{r\to\infty} \frac{\#\{j: x_1<x,\ r_j\leq r,
    \ \epsilon=e\}}{\#\{j:r_j\leq r,\ \epsilon=e\}},
\end{align*}
for $e\in\{0,1\}$.
The resulting distributions are displayed in Figure~\ref{fig:P1}.
Close to the origin of the plots, they show a stronger level repulsion than the
Wigner surmise $P_{\text{GUE}}(x)$.

\end{document}